\renewcommand{\:}{\colon}
\newcommand{\la}{\langle}
\newcommand{\ra}{\rangle}
\newcommand{\RR}{\mathbb{R}}
\newcommand{\Ut}{\ensuremath{U_t}}
\newtheorem{theorem}{Theorem}[section]
\newtheorem{proposition}[theorem]{Proposition}
\newtheorem{corollary}[theorem]{Corollary}
\newtheorem{lemma}[theorem]{Lemma}
\theoremstyle{definition}
\newtheorem{Assumption}{Assumption}
\theoremstyle{remark}
\newtheorem*{remark}{Remark}
\title[Stabilization of PDEs by Sequential Action Control]{Stabilization of Partial Differential Equations by Sequential Action Control}
\author[]{Yan Brodskyi$^{1}$, Falk M. Hante$^{1}$,~Arno Seidel$^{2}$}
\thanks{$^1$ Department of Mathematics, Humboldt-Universit\"at zu Berlin,
Unter den Linden 6, 10099 Berlin, Germany (\url{yan.brodskyi@hu-berlin.de}, \url{falk.hante@hu-berlin.de}); $^2$Friedrich-Alexander-Universit\"at Erlangen-N\"urn\-berg, Germany (\url{arno.seidel@fau.de})}
\date{August 26, 2022}
\begin{document}

\maketitle
\begin{abstract}
{We present a framework of sequential action control (SAC) for stabilization of systems of partial differential equations which can be posed as abstract semilinear control problems in Hilbert spaces. We follow a late-lumping approach and show that the control action can be explicitly obtained from variational principles using adjoint information. Moreover, we analyze the closed-loop system obtained from the SAC feedback for the linear problem with quadratic stage costs. We apply this theory to a prototypical example of an unstable heat equation and provide numerical results as the verification and demonstration of the framework.}
\end{abstract}

\section{Introduction}
With the rising performance of embedded and networked systems the stabilization of processes governed by partial differential equations 
(PDEs) in real time and under uncertainties becomes an important topic in control theory. Here we consider processes predicted by 
semilinear PDEs of evolution type with distributed control given in an abstract sense by the operator differential equation
\begin{equation*}
  \dot{y}(t)=Ay(t) + f(t,y(t)) u(t) \in H,\quad t>0
\end{equation*}
with a state $y$ and control $u$ in possibly infinite-dimensional spaces $H$ and $U$, respectively, and satisfying the initial condition
\begin{equation*}
 y(0)=y_0
\end{equation*}
for some initial state $y_0 \in H$. The goal is to guarantee asymptotic stabilization of $y$ at the origin, or more generally, to consider nonlinear path following for a given desired trajectory $y_d(t)$, $t>0$. Similar problems also arise in optimal control of PDEs, if one wishes to exploit the Turnpike property, saying that in large optimization time horizons, the optimal solution remains exponentially close to an optimal stationary solution for most of the time. This property has been proven for various optimal control systems, see \cite{GrueneEtAl2019, GugatHante, TrelatZhang2018} and the references therein.

Our investigations here concern a moving horizon strategy 
for the control design which allows to include measurements to account for uncertainties in an online fashion, e.g., for the typically not exactly known initial 
state $y_0$ for the next horizon. A challenging application, for instance, is the stabilization of gas networks, where the realized demand has some variations over a
predicted one during intra-day operation, see e.g., \cite{HanteEtAl2017}.

In this context, the most prominent control design is (nonlinear) model predictive control (MPC), where future control action is obtained from the solution of a dynamic optimization problem. Concerning problems involving PDEs, stability analysis for the closed loop with a focus on late lumping is carried out, for example, in 
\cite{AllaVolkwein2015, Altmueller2014, AltmuellerGruene2012, DubljevicEtAl2006, FleigGruene2018, ItoKunisch2002, GrueneEtAl2019}. In this work, we consider a moving horizon strategy differing from MPC in avoiding  to numerically compute the solution to a time-depend optimal control problem in each step. Instead, a piecewise constant control action is computed by selecting a control value and an application time 
that ensures a certain decrease of the current stage costs. This principle has been introduced for nonlinear problems with ordinary differential equations as sequential action control (SAC) in \cite{AnsariMurphey2016}. It relies on representations of the so-called mode insertion gradient in \cite{Axelsson2008} using adjoint information originating in optimization of switched dynamical systems and the needle variation in \cite{PontryaginEtAl1962} as used in the derivation of Pontryagin's principle. 

Our contribution here is to extend the idea of sequential action control to the above PDE setting in a Hilbert space framework. We show that the control action can be explicitly obtained from variational principles using an adjoint PDE. Moreover, for the linear case this allows us to show that the closed-loop performance for quadratic stage costs can in first order be characterized by a system under a particular linear feedback. Using this, we derive mesh independent stabilizing properties of this method prototypically for a benchmark problem from \cite{AltmuellerGruene2012}. A finite-dimensional controller can then be obtained using Galerkin approximations. Unlike applying the theory of \cite{AnsariMurphey2016} to a finite-dimensional (lumped) approximation of the PDE this allows independent discretizations for the forward and the adjoint problem. For this linear problem we also provide a numerical study for both full domain and subdomain control with the most important parameters in this framework. Further, we numerically investigate the robustness of the closed-loop stabilization by SAC with respect to random disturbances and compare SAC with the standard linear-quadratic regulator (LQR) method. SAC turns out to be faster in all cases and more robust for the case of subdomain control with small disturbances.

The remaining article is organized as follows. In Section~\ref{sec:framework}, we introduce the Hilbert space framework of SAC. In Section~\ref{sec:linear-quad}, we consider the important special case of linear quadratic problems, for which we characterize the closed-loop system for the stability analysis of SAC actions. Moreover, we analyze the performance of SAC for an unstable heat equation. In Section ~\ref{sec:numerical}, we show the corresponding numerical results including the comparison with LQR. In Section~\ref{sec:conclusion}, we provide concluding remarks.

\section{A Hilbert space framework for SAC}\label{sec:framework}

Let $H$ and $U$ be real (separable) Hilbert spaces with inner products $\la \cdot, \cdot \ra_H$ and $\la \cdot, \cdot \ra_U$, and its topological dual spaces $H^*$ and $U^*$, respectively. Further, let $\mathfrak{L} (U,H)$ denote the space of bounded linear operators from $U$ to $H$. In this setting we consider a moving horizon strategy with the stage problem
\begin{equation}\label{eq:stageproblem}
 \begin{aligned}
  \min~J_1(u) = &\int_0^T l_1(y(s))\,ds + m(y(T))\\
  \text{s.t.}\qquad&\dot{y}(t)=Ay(t) + f(t,y(t)) u(t),~t \in (0,T),\quad y(0) = y_0,\\
  &u(t) \in U,~t \in (0,T),  
 \end{aligned}
\end{equation}
where $A$ is a (possibly unbounded) linear operator on $H$, $l_1$ and $m$ are (possibly nonlinear) functionals on $H$, $f: (0,T)\times H \to \mathfrak{L} (U,H)$, where $\mathfrak{L} (U,H)$ is space of bounded linear operators, $y_0$ is a fixed look ahead initial state in $H$, $T$ is a fixed time horizon and the minimization is with respect to all $u \in \Ut=L^p(0,T;U)$ for some $p\geq 1$. 

In order to fix notation in what follows, we denote, whenever $X$ and $Y$ are Hilbert spaces, for $x \in X$ the dual pairing by $\la x^*, x \ra_{X^*,\; X} = x^*(x)$, and for any bounded linear operator $B$ from $X$ to $Y$ by $ \|B\|_{op} = \inf \{c:\|Bx\|_Y \le c \|x\|_X ~ \mbox{for all} ~ x \in X\} $ the operator norm, by $B^*\: Y^* \rightarrow X^*$ defined as $\la B^*(y^*), x \ra_{X^*,\; X} = \la y^*, Bx \ra_{Y^*,\; Y}$ the topological adjoint operator, and by $B^{\star}\: Y \rightarrow X$ defined as $\la B^{\star}y, x \ra_X = \la y, Bx \ra_Y$ the Hilbert space adjoint operator. Further notations and technical assumptions on the operators and functionals in \eqref{eq:stageproblem} will be introduced in the sequel.

\subsection{The SAC principle}\label{subsec:principle}
Given a reference control $u_1 \in \Ut$ ($u_1 \equiv 0$ is a feasible choice), let $u_{\lambda,\tau,v}$ denote the needle variation defined by
\begin{equation*}
 u_{\lambda,\tau,v} = \begin{cases} u_1(t), & t \notin [\tau-\frac{\lambda}{2},\tau+\frac{\lambda}{2}]\\ v, & t \in [\tau-\frac{\lambda}{2},\tau+\frac{\lambda}{2}]. \end{cases}
\end{equation*}
Further, let $\frac{dJ_1}{d\lambda^+}(\tau,v)$ denote the sensitivity
\begin{equation}\label{eq:mig}
 \frac{dJ_1}{d\lambda^+}(\tau,v) = \lim_{\lambda \downarrow 0} \frac{J_1(u_{\lambda,\tau,v})-J_1(u_1)}{\lambda}
\end{equation}
for the current stage \eqref{eq:stageproblem}. The principle of SAC relies on choosing the control values $u_{\text{opt}}(\tau)$ as
\begin{equation}\label{eq:l2problem}
\begin{aligned}
u_{\text{opt}}(\tau) &:= \mbox{argmin}_{u \in U}~l_2(u; \tau):=\frac12\left[ \frac{dJ_1}{d\lambda^+}(\tau,u)-\alpha_d \right]^2+\frac12\la u,R u\ra_U,
\end{aligned}
\end{equation}
for some application time $\tau \in (0,T)$, where $\alpha_d < 0$, is to be chosen appropriately in order to obtain a sufficiently large reduction for the current prediction according to \eqref{eq:stageproblem} when $u_{\text{opt}}(\tau)$ is applied for the current stage on the interval $[\tau-\frac{\bar{\lambda}}{2},\tau+\frac{\bar{\lambda}}{2}]$ with a suitably chosen duration $\bar{\lambda}>0$ until a new control has been computed for a shifted prediction horizon. Here $\bar{\lambda}$ corresponds to the length of the application time of the control $u_{\text{opt}}(\tau)$. The operator $R$ in \eqref{eq:l2problem} can be suitably chosen as a regularization parameter or to model control costs in this process.

Figure~\ref{fig:A03-RH-Strategien} shows the schematic overview of the SAC principle. In general, it consists from the following steps:
\begin{enumerate}
    \item predict the nominal dynamics of the state and the adjoint for the reference control $u_1$;
    \item compute the control values $u_{\text{opt}}(\tau)$ according to the SAC principle \eqref{eq:l2problem}; 
    \item select an application time $\tau$ and the action’s duration $\bar{\lambda}$;
    \item apply $u_{\bar{\lambda}, \tau, u_{\text{opt}}(\tau)}$ and repeat the process at the next sample time, $t_0 = t_0 + t_s$.
\end{enumerate}
\begin{figure}[!t]
\centering
\captionsetup{width=0.8\linewidth}
\includegraphics[width=0.6\linewidth]{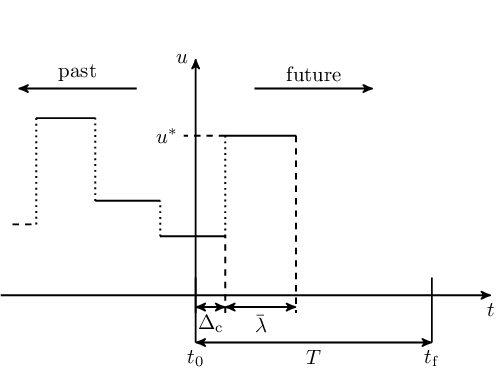}
\caption{A schematic illustration of the SAC principle \cite{AnsariMurphey2016}.}
\label{fig:A03-RH-Strategien}
\end{figure}
For details concerning all of these steps we make the following hypotheses. 

\begin{Assumption}\label{ass:main}
The operator $A$ is densely defined on $D(A) \subset H$ and generates a strongly continuous semigroup $S(t)=e^{tA}$ on $H$. The cost functionals $l_1\: H \to \RR$ and $m\: H \to \RR$ are continuously differentiable in the sense of Fr\'{e}chet and uniformly bounded. For every $t \in (0,T), ~ f(t,\cdot)\: H \to \mathfrak{L} (U,H)$ is continuously Fr\'{e}chet differentiable. Furthermore, $f$ is linearly bounded or $T$ is sufficiently small. The operator $R : U \to U$ is bounded, self-adjoint and satisfies, for some $\gamma_R>0$, the coercivity estimate 
$\la R u,u\ra \geq \gamma_R \|u\|_U^2$ for all $u \in U$. 
\end{Assumption}

Under the above assumptions, the abstract initial value problem in \eqref{eq:stageproblem} 
\begin{equation}\label{eq:forwardproblem}
\dot{y}(t)=Ay(t) + f(t,y(t)) u(t),~u(t) \in U,\quad t \in (0,T), \quad y(0) = y_0
\end{equation}
has a unique solution $y \in C([0,T];H)$ in the mild sense, i.e., 
\begin{equation}\label{eq:forwardproblem-mild}
y(t) = e^{tA} y_0+\int_0^t e^{(t-s)A}f(s,y(s)) u(s)\,ds,\quad t \in [0,T] 
\end{equation}
for any $u \in \Ut$ and any $y_0 \in H$. Moreover, the following adjoint problem
\begin{equation}\label{eq:adjointproblem}
\begin{aligned}
 \dot{p}(t) &= -A^* p(t) -((f(t,y(t))u(t))_{y})^{*} p(t) - (l_1)_y(y(t))\\
 p(T) &= m_y(y(T))
\end{aligned}
\end{equation}
also has a unique solution $p \in C([0,T];H^*)$ in a mild sense, i.e., 
\begin{equation}\label{eq:adjointproblem-mild}
p(t) = e^{(T-t)A^*} p(T)+\int_t^T e^{(s - t)A^*}(((f(s,y(s))u(s))_{y})^{*}p(s) + (l_1)_y(y(s)))\,ds,\quad t \in [0,T] 
\end{equation} 
for any $u \in \Ut$. Here $f(t,y(t))u(t))_{y}$ is a gradient of $f(t,y(t))u(t))$ with respect to $y$. $A^{*}$ is the generator of $C_0$-semigroup on $H^*$, see \cite[Chapter 1, Theorem 10.6]{Pazy}. The existence and uniqueness of the mild solution of \eqref{eq:adjointproblem} is given by \cite[Chapter 6, Theorem 1.2]{Pazy} due to the continuous Fr\'{e}chet differentiability assumptions on $f$ and linearity of the adjoint problem in $p$. In addition, the problem \eqref{eq:l2problem} admits an explicit solution, as stated in the following theorem.

\begin{theorem}\label{thm:main} Let $y \in C([0,T];H)$ be a solution of the state equation in \eqref{eq:stageproblem} with $u$ being a given reference control $u_1 \in C(0,T;U)$. Further, let $p \in C([0,T];H^*)$
be the corresponding solution of the adjoint problem \eqref{eq:adjointproblem}. With $P \in C([0,T];U^*)$ defined by
\begin{equation}\label{eq:Pdef}
 P(t)\: w \mapsto \la p(t), f(t,y(t)) w\ra_{H^{*}, \; H},\quad t \in [0,T],
\end{equation}
for which the operator $P^{\star}(t)P(t)+R^{\star}$ has a bounded inverse. And with 
\begin{equation}\label{eq:pidef}
\Pi(t) = (P^{\star}(t)P(t)+R^{\star})^{-1}, \quad t \in [0,T],
\end{equation}
the optimal solution $u_{\text{opt}}(\tau)$ of the problem \eqref{eq:l2problem} is given by
\begin{equation}\label{eq:soll2problem}
 u_{\text{opt}}(\tau) = \Pi(\tau)[P^{\star}(\tau)P(\tau)u_1(\tau)+P^{\star}(\tau)\alpha_d]
\end{equation}
for all $\tau \in [0,T]$.
\end{theorem}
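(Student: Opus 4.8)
The plan is to reduce problem \eqref{eq:l2problem}, solved pointwise in $\tau$, to the minimization of an explicit quadratic functional on $U$ by first making the sensitivity \eqref{eq:mig} explicit, and then to solve the resulting first-order condition.

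First I would establish the mode insertion gradient formula
\[
 \frac{dJ_1}{d\lambda^+}(\tau,v) = \langle p(\tau), f(\tau,y(\tau))(v-u_1(\tau))\rangle_{H^*,H} = P(\tau)(v-u_1(\tau)).
\]
To do this, I would compute the first variation of the mild state map \eqref{eq:forwardproblem-mild} induced by the needle $u_{\lambda,\tau,v}$. For $t<\tau$ the perturbed and reference mild solutions coincide; at $t=\tau$ the needle injects, to first order in $\lambda$, the increment $f(\tau,y(\tau))(v-u_1(\tau))$ (here the continuity hypothesis $u_1\in C(0,T;U)$ together with continuity of $y$ and $f$ is what legitimizes the pointwise evaluation and the limit), which is then transported for $t>\tau$ by the semigroup linearized along the reference trajectory, i.e. the variational equation associated with $A$ and $(f(t,y)u)_y$. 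Differentiating $J_1$ along this variation yields a terminal-plus-running pairing of the state variation against $(l_1)_y(y(\cdot))$ and $m_y(y(T))$; substituting the adjoint representation \eqref{eq:adjointproblem-mild} and invoking the duality (integration-by-parts) identity between the forward variational equation and the adjoint \eqref{eq:adjointproblem} collapses this into the single pairing at time $\tau$ above. Rewriting via the Hilbert-space adjoint $f(\tau,y(\tau))^*$ and the definition \eqref{eq:Pdef} gives $P(\tau)(v-u_1(\tau))$.

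Given this, and abbreviating $u=u(\tau)$, $u_1=u_1(\tau)$, $P=P(\tau)$, problem \eqref{eq:l2problem} becomes
\[
 \min_{u\in U}\; l_2(u) = \tfrac12\big[P(u-u_1)-\alpha_d\big]^2 + \tfrac12\langle u, Ru\rangle_U .
\]
This is strictly convex: the first term is the square of an affine scalar functional, hence convex, and the second is bounded below through the coercivity $\langle Ru,u\rangle_U \ge \gamma_R\|u\|_U^2$ of Assumption~\ref{ass:main}. Computing the Fréchet derivative, using that $R=R^*$ and that the adjoint $P^*\colon\RR\to U$ represents the functional $P$, and setting it to zero gives the optimality condition
\[
 P^*\big(Pu - Pu_1 - \alpha_d\big) + Ru = 0, \qquad\text{i.e.}\qquad (P^*P + R)\,u = P^*P\,u_1 + P^*\alpha_d .
\]
Since $P^*P\ge 0$ is self-adjoint and $R=R^*$ is coercive, the operator $P^*P+R^*=P^*P+R$ is bounded, self-adjoint and coercive, hence boundedly invertible by Lax--Milgram; this simultaneously shows that $\Pi(\tau)$ in \eqref{eq:pidef} is well defined and that the critical point is the unique global minimizer. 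Solving for $u$ then produces exactly \eqref{eq:soll2problem}.

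The main obstacle is the rigorous derivation of the mode insertion gradient in the mild/semigroup setting: one must justify Gâteaux differentiability of the mild solution with respect to the needle insertion, establish the duality identity linking the forward variational equation to \eqref{eq:adjointproblem-mild}, and control the limit $\lambda\downarrow 0$ under the stated regularity of $f$, $l_1$ and $m$ (using the linear boundedness of $f$, or small $T$, for the a priori bounds). Once this sensitivity is in hand, the remaining minimization is a routine Hilbert-space quadratic optimization.
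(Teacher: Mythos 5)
Your proposal is correct and follows essentially the same two-step structure as the paper's proof: first the adjoint representation of the mode insertion gradient, $\frac{dJ_1}{d\lambda^+}(\tau,v)=\la p(\tau),f(\tau,y(\tau))(v-u_1(\tau))\ra_{H^{*},H}$, then pointwise quadratic minimization in $U$, with the same first-order condition $(P^*P+R^*)u^*=P^*Pu_1+P^*\alpha_d$ and the same coercivity argument giving bounded invertibility of $P^*(\tau)P(\tau)+R^*$ (the paper reaches this via an explicit $\varepsilon$-expansion and the fundamental lemma of calculus of variations, which is equivalent to your Fr\'echet-derivative/Lax--Milgram phrasing). The only substantive difference is that the step you flag as the main obstacle --- deriving the mode insertion gradient in the mild/semigroup setting --- is not proved from scratch in the paper either: it is obtained by citing \cite[Lemma~4.3]{Li_Yong} or \cite[Theorem~10]{RuefflerHante2016}, whose content is precisely the needle-variation and adjoint-duality argument you sketch.
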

\begin{proof}
Under the Assumption~\ref{ass:main}, we can use results in \cite[Theorem~10]{RuefflerHante2016} to obtain that the limit in \eqref{eq:mig}
exists and that it is given by
\begin{equation}\label{eq:migrep}
 \frac{dJ_1}{d\lambda^+}(\tau,u)=\la p(\tau),f(\tau,y(\tau))(u(\tau)-u_1(\tau))\ra_{H^{*},\; H}
\end{equation}
with $p$ satisfying \eqref{eq:adjointproblem}. Moreover, as a sum of convex functions, $l_2(\tau,u(\tau))$ is convex as a function of $u$. With this, necessary optimality conditions become sufficient. Classical optimality conditions (\cite[Theorem~1.46]{Hinze_Pinnau}) now yield that
\begin{equation}\label{eq:optcond2}
\frac{\mathfrak{d} l_2}{\mathfrak{d}u_{\text{opt}}}(u_{\text{opt}})\eta = \lim_{\varepsilon \downarrow 0} \frac1\varepsilon[l_2(u_{\text{opt}}+ \varepsilon\eta)-l_2(u_{\text{opt}})] = 0.
\end{equation}
With \eqref{eq:migrep}, we obtain that for all $\tau \in [0,T]$ and all $\eta \in C(0,T;U)$
\begin{equation*}
\begin{aligned}
 l_2(u_{\text{opt}}(\tau)+ \varepsilon\eta(\tau)) =~&\frac12 \left[ \la p(\tau),f(\tau,y(\tau))(u_{\text{opt}}(\tau)+\varepsilon\eta-u_1(\tau))\ra_{H^{*},\; H}-\alpha_d\right]^2\\
 &\quad + \frac12\la u_{\text{opt}}(\tau)+\varepsilon\eta(\tau),R(u_{\text{opt}}(\tau)+\varepsilon\eta(\tau))\ra_U\\
\end{aligned}
\end{equation*}
and, with a short calculation, that
\begin{equation}\label{eq:underint}
\begin{aligned}
 l_2(u_{\text{opt}}(\tau)+ \varepsilon\eta(\tau))-l_2(u_{\text{opt}}(\tau)) =~&\la p(\tau),f(\tau,y(\tau))(u_{\text{opt}}(\tau)-u_1(\tau)) \ra_{H^{*},\; H} \la p(\tau),\varepsilon f(y(\tau),\tau) \eta(\tau) \ra_{H^{*},\; H} \\
 & + \frac12 \la p(\tau),\varepsilon f(y(\tau),\tau) \eta(\tau) \ra_{H^{*},\; H}^2 - \alpha_d\la p(\tau),\varepsilon f(y(\tau),\tau) \eta(\tau) \ra_{H^{*},\; H} \\
 & + \frac{\varepsilon}{2} \la u_{\text{opt}}(\tau),R \eta(\tau) \ra_U + \frac{\varepsilon}{2} \la \eta(\tau),R(u_{\text{opt}}(\tau)+\varepsilon\eta(\tau)) \ra_U.
\end{aligned}
\end{equation}
By linearity, the right hand side in \eqref{eq:underint} converges uniformly as $\varepsilon \to 0$. Hence, we can take the limit in \eqref{eq:optcond2} and obtain,
for almost every $\tau \in [0,T]$, for all $\xi \in U$
\begin{equation}\label{eq:firstvarpointwise}
 \left[ \la p(\tau),f(\tau,y(\tau))(u_{\text{opt}}(\tau)-u_1(\tau))\ra_{H^{*},\; H} -\alpha_d \right]\la p(\tau),f(\tau,y(\tau))\xi\ra_{H^{*},\; H} + \la R^{\star} u_{\text{opt}}(\tau),\xi\ra_U = 0.
\end{equation}
With $P$ defined as in \eqref{eq:Pdef}, we have
\begin{equation}\label{eq:firstvarpointwiseP}
\begin{aligned}
&\left[ \la p(\tau),f(\tau,y(\tau))(u_{\text{opt}}(\tau)-u_1(\tau))\ra_{H^{*},\; H} -\alpha_d \right]\la p(\tau),f(\tau,y(\tau))\xi\ra_{H^{*},\; H} + \la R^{\star} u_{\text{opt}}(\tau),\xi\ra_U\\
&\quad \quad \quad \quad \quad \quad \quad \quad \quad \quad \quad  = \la P(\tau)(u_{\text{opt}}(\tau)-u_1(\tau))-\alpha_d,P(\tau)\xi\ra_{\RR} + \la R^{\star} u_{\text{opt}}(\tau),\xi\ra_U\\ 
&\quad \quad \quad \quad \quad \quad \quad \quad \quad \quad \quad  = \la P^{\star}(\tau)(P(\tau)(u_{\text{opt}}(\tau)-u_1(\tau))-\alpha_d),\xi\ra_{U} + \la R^{\star} u_{\text{opt}}(\tau),\xi\ra_U\\
&\quad \quad \quad \quad \quad \quad \quad \quad \quad \quad \quad  = \la (P^{\star}(\tau)P(\tau)+R^{\star})u_{\text{opt}}(\tau)-P^{\star}(\tau)P(\tau)u_1(\tau)-P^{\star}(\tau)\alpha_d,\xi\ra_{U}.
\end{aligned}
\end{equation}
Using \eqref{eq:firstvarpointwiseP} in \eqref{eq:firstvarpointwise}, and that it holds for all $\xi \in U$ yields
\begin{equation}\label{eq:operatoreq}
 (P^{\star}(\tau)P(\tau)+R^{\star})u_{\text{opt}}(\tau)-P^{\star}(\tau)P(\tau)u_1(\tau)-P^{\star}(\tau)\alpha_d = 0.
\end{equation}
For all $\tau \in [0,T]$ the operator $P^{\star}(\tau)P(\tau)$ is self-adjoint and under the hypotheses in Assumption~\ref{ass:main} the operator 
$P^{\star}(\tau)P(\tau)+R^{\star}$ is self-adjoint and satisfies
\begin{equation*}
 \la(P^{\star}(\tau)P(\tau)+R)u,u\ra_U = (P(\tau)u)^2 + \la Ru,u\ra_U \geq \gamma_R \|u\|_U^2.
\end{equation*}
Hence, the operator $P^{\star}(\tau)P(\tau)+R^{\star}$ has a bounded inverse for all $\tau \in [0,T]$ and \eqref{eq:soll2problem} 
follows by rearranging terms in \eqref{eq:operatoreq}.
\end{proof}

\noindent The computation of the SAC actions using the adjoint as in Theorem~\ref{thm:main} is highly efficient compared to the computation of the SAC actions using the numerical approximations of the sensitivity. 

\subsection{Action duration}\label{subsec:action}

For $u_{\text{opt}}(\tau)$ given by the SAC principle \eqref{eq:l2problem}, for arbitrary time $\tau \in (0,T)$ we now want to assure that we can find the duration $\bar{\lambda}$, such that the corresponding change in cost $J_1$ will be negative. Since this is not obvious in the infinite dimensional setting, we provide an explicit proof.

\begin{proposition}\label{eq:prop2.2}
Let $y \in C([0,T];H)$ be a solution of the state equation in \eqref{eq:stageproblem}, $p \in C([0,T];H^*)$ be the corresponding solution of the adjoint problem \eqref{eq:adjointproblem} with $u_1 \in C(0,T;U)$ being a given reference control. Furthermore, let $u_{\text{opt}}(\tau)$ be a control obtained by the SAC principle \eqref{eq:l2problem}. Then there exists a parameter $\alpha_d < 0$, such that the mode insertion gradient \eqref{eq:mig} satisfies 
\begin{equation}
    \begin{aligned}
    & \frac{dJ_1}{d\lambda^+}(\tau,u_{\text{opt}}(\tau)) < 0
    \end{aligned}
\end{equation}
for all $\tau \in (0,T)$. 
\end{proposition}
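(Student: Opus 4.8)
The plan is to evaluate the mode insertion gradient at the optimal control $u^*(\tau)$ furnished by Theorem~\ref{thm:main} and to show that its sign can be forced to be negative by taking $\alpha_d$ sufficiently negative. First I would recall from \eqref{eq:migrep} that for any admissible control the mode insertion gradient admits the representation $\frac{dJ_1}{d\lambda^+}(\tau,u)=\la p(\tau),f(\tau,y(\tau))(u(\tau)-u_1(\tau))\ra_{H^{*},H}=P(\tau)(u(\tau)-u_1(\tau))$ with $P$ as in \eqref{eq:Pdef}. Writing $g(\tau)\in U$ for the Riesz representative of the functional $P(\tau)$, so that $P(\tau)w=\la g(\tau),w\ra_U$ and $P^*(\tau)\alpha=\alpha\,g(\tau)$, I would substitute the explicit minimizer \eqref{eq:soll2problem} and use the identity $\Pi(\tau)^{-1}=P^*(\tau)P(\tau)+R^*$ from \eqref{eq:pidef} to simplify $u^*(\tau)-u_1(\tau)=\Pi(\tau)\big(\alpha_d g(\tau)-R\,u_1(\tau)\big)$. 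This yields the closed-form expression
\begin{equation*}
\frac{dJ_1}{d\lambda^+}(\tau,u^*(\tau))=\alpha_d\,\la g(\tau),\Pi(\tau)g(\tau)\ra_U-\la g(\tau),\Pi(\tau)R\,u_1(\tau)\ra_U,
\end{equation*}
which reduces the claim to a sign analysis of two scalar terms.

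Next I would exploit the positivity of $\Pi(\tau)$. Since $P^*(\tau)P(\tau)\succeq 0$ and, by Assumption~\ref{ass:main}, $\la R u,u\ra_U\geq\gamma_R\|u\|_U^2$, the operator $\Pi(\tau)^{-1}$ is coercive with constant $\gamma_R$, so $\Pi(\tau)$ is self-adjoint, positive definite and satisfies $\|\Pi(\tau)\|\leq\gamma_R^{-1}$. Setting $a(\tau):=\la g(\tau),\Pi(\tau)g(\tau)\ra_U\geq 0$, the first term $\alpha_d\,a(\tau)$ is nonpositive for every $\alpha_d<0$. For the indefinite cross term I would apply the Cauchy--Schwarz inequality in the inner product induced by $\Pi(\tau)$, giving
\begin{equation*}
\big|\la g(\tau),\Pi(\tau)R\,u_1(\tau)\ra_U\big|\leq a(\tau)^{1/2}\,\la R\,u_1(\tau),\Pi(\tau)R\,u_1(\tau)\ra_U^{1/2}\leq a(\tau)^{1/2}\,\frac{\|R\|\,\|u_1(\tau)\|_U}{\gamma_R^{1/2}}.
\end{equation*}
Combining the two estimates yields $\frac{dJ_1}{d\lambda^+}(\tau,u^*(\tau))\leq a(\tau)^{1/2}\big(\alpha_d a(\tau)^{1/2}+\gamma_R^{-1/2}\|R\|\,\|u_1(\tau)\|_U\big)$.

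To produce a single $\alpha_d$ valid for all $\tau$ I would pass to uniform bounds. Because $y,p,u_1$ are continuous on the compact interval $[0,T]$ and $f$ is continuous, the maps $\tau\mapsto g(\tau)$ and $\tau\mapsto\Pi(\tau)$ are continuous, so $M_1:=\sup_{\tau}\|u_1(\tau)\|_U<\infty$ and $\tau\mapsto a(\tau)$ is continuous. Provided $P(\tau)\neq 0$ for every $\tau$, equivalently $a(\tau)>0$, there is $a_{\min}:=\inf_\tau a(\tau)>0$, and choosing $\alpha_d<-\gamma_R^{-1/2}\|R\|M_1\,a_{\min}^{-1/2}$ makes the bracket strictly negative uniformly, whence $\frac{dJ_1}{d\lambda^+}(\tau,u^*(\tau))<0$ for all $\tau$. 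I expect the main obstacle to be exactly this nondegeneracy: at any $\tau$ with $P(\tau)=0$ one has $g(\tau)=0$, so $a(\tau)=0$ and both terms vanish, the mode insertion gradient is identically $0$, and strict negativity fails; securing $a_{\min}>0$ is therefore the essential analytic point, and it is here that continuity on the compact horizon and the coercivity constant $\gamma_R$ are used. For the canonical reference $u_1\equiv 0$ the cross term disappears and the expression collapses to $\alpha_d\,a(\tau)$, so every $\alpha_d<0$ already works at each $\tau$ with $P(\tau)\neq 0$; the substance of the proposition is thus the uniform control of the cross term for a general $u_1$, which the estimate above delivers once the lower bound on $a$ is in force.
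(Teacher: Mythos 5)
Your proof is correct and follows the same route as the paper's: substitute the closed form \eqref{eq:soll2problem} into the representation \eqref{eq:migrep}, observe that the $\alpha_d$-term equals $\alpha_d$ times the nonnegative scalar $P(\tau)\Pi(\tau)P^*(\tau)$ (your $a(\tau)=\la g(\tau),\Pi(\tau)g(\tau)\ra_U$), and make $\alpha_d$ negative enough to dominate the $u_1$-dependent remainder. The differences are in execution, and each one buys something the paper's proof lacks. Your identity $u^*(\tau)-u_1(\tau)=\Pi(\tau)\bigl(\alpha_d g(\tau)-Ru_1(\tau)\bigr)$ collapses the paper's two cross terms into one, and Cauchy--Schwarz in the $\Pi(\tau)$-weighted inner product attaches the factor $a(\tau)^{1/2}$ to that term, so the gradient factors as $a(\tau)^{1/2}\bigl(\alpha_d a(\tau)^{1/2}+\gamma_R^{-1/2}\|R\|\,\|u_1(\tau)\|_U\bigr)$; the paper instead bounds the cross terms crudely by products of operator norms. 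More importantly, the proposition asks for a single $\alpha_d$ valid for all $\tau\in(0,T)$, and you obtain it by passing to uniform bounds ($M_1$, $a_{\min}$) over the compact horizon, whereas the paper's final condition on $\alpha_d$ is pointwise in $\tau$ (its quantities $\bar f$, $\bar P$, $\|\Pi(\tau)\|_{op}$, $\|p(\tau)\|_{H^*}$, $\|u_1(\tau)\|_U$ all depend on $\tau$) and uniformity is never addressed. Finally, you correctly isolate the nondegeneracy requirement: at any $\tau$ with $P(\tau)=0$ one has $u^*(\tau)=0$ and the mode insertion gradient vanishes identically, so strict negativity is impossible there; the paper needs this hypothesis just as much but hides it --- indeed its closing inequality (which also carries a sign slip, since the right-hand side must be negated for the conclusion to follow from the displayed bound) cannot be satisfied when $P(\tau)\Pi(\tau)P^*(\tau)=0$. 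Your proposal is therefore best read as a repaired and sharpened version of the paper's own argument rather than a genuinely different one.
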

\begin{proof}
Combining \eqref{eq:soll2problem}, \eqref{eq:migrep} and using Assumption~\ref{ass:main}, with the operator norms $\bar{f} = \|f(\tau, y(\tau))\|_{op}$ and $\bar{P} = \|P^{\star}(\tau)P(\tau)\|_{op}$  we have
\begin{equation}
\begin{aligned}
& \frac{dJ_1}{d\lambda^+}(\tau,u_{\text{opt}}(\tau)) = \la p(\tau), f(\tau,y(\tau))\big(\Pi(\tau)[P^{\star}(\tau)P(\tau)u_1(\tau)+P^{\star}(\tau)\alpha_d] - u_1(\tau) \big) \ra_{H^{*},\; H}\\
&\quad \quad \quad \quad \quad \quad = \la p(\tau),f(\tau,y(\tau)) \Pi(\tau) P^{\star}(\tau)\alpha_d \ra_{H^{*},\; H} - \la p(\tau),f(\tau,y(\tau))u_1(\tau) \ra_{H^{*},\; H}\\
&\quad \quad \quad \quad \quad \quad + \la p(\tau), f(\tau,y(\tau))\Pi(\tau)P^{\star}(\tau)P(\tau)u_1(\tau) \ra_{H^{*},\; H} \\
&\quad \quad \quad \quad \quad \quad \le P(\tau)\Pi(\tau)P^{\star}(\tau)\alpha_d +
\|p(\tau)\|_{H^{*}} \bar{f} \|u_1(\tau)\|_U \big( \|\Pi(\tau)\|_{op} \bar{P} + 1 \big).\\
\end{aligned}    
\end{equation}
The operator $\Pi(\tau)$ is positive, as the inverse of a positive operator. This gives us that the operator $P(\tau)\Pi(\tau)P^{\star}(\tau)$  is also positive. By choosing $\alpha_d < 0$ such that 
\begin{equation}
    \begin{aligned}
    & | P(\tau)\Pi(\tau)P^{\star}(\tau)\alpha_d | < \|p(\tau)\|_{H^{*}} \bar{f} \|u_1(\tau)\|_U \big( 1 + \|\Pi(\tau)\|_{op} \bar{P} \big),
    \end{aligned}
\end{equation}
we receive $\frac{dJ_1}{d\lambda^+}(\tau,u_{\text{opt}}(\tau)) < 0$. 
\end{proof}

\begin{corollary}\label{col:2.3}
Let $y \in C([0,T];H)$ be a solution of the state equation in \eqref{eq:stageproblem}, $p \in C([0,T];H^*)$ be the corresponding solution of the adjoint problem \eqref{eq:adjointproblem} with $u_1 \in C(0,T;U)$ being the given reference control. Furthermore, let $u_{\text{opt}}(\tau)$ be a control obtained by the SAC principle \eqref{eq:l2problem}. Then for all $\tau \in (0,T)$ there exists a parameter  $\alpha_d < 0$ and there exists a neighborhood $N(\tau) \subset (0,T)$ around $\tau$, such that the mode insertion gradient \eqref{eq:mig} satisfies  
\begin{equation}
    \begin{aligned}
    & \frac{dJ_1}{d\lambda^+}(t,u_{\text{opt}}(\tau)) < 0
    \end{aligned}
\end{equation}
for all $t \in N(\tau)$.
\end{corollary}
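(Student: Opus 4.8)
The plan is to fix $\tau$ first, invoke Proposition~\ref{eq:prop2.2} to secure both the parameter $\alpha_d<0$ and the resulting control value $u^*(\tau)\in U$, and then treat the mode insertion gradient as a scalar function of the application time $t$ with this control value held fixed. Concretely, I would set
\begin{equation*}
 g(t) := \frac{dJ_1}{d\lambda^+}(t,u^*(\tau)) = \la p(t),f(t,y(t))\big(u^*(\tau)-u_1(t)\big)\ra_{H^{*},H},
\end{equation*}
where the second equality is the representation~\eqref{eq:migrep}; here $u^*(\tau)$ is a \emph{fixed} element of $U$, no longer varying with the application time. By the choice of $\alpha_d$ furnished by Proposition~\ref{eq:prop2.2} we have $g(\tau)<0$.

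The heart of the argument is then to show that $g\:(0,T)\to\RR$ is continuous, after which the corollary follows at once: continuity of $g$ at $\tau$ together with $g(\tau)<0$ yields a $\delta>0$ with $g(t)<0$ on $N(\tau):=(\tau-\delta,\tau+\delta)\cap(0,T)$, which is exactly the asserted neighborhood. To establish continuity I would write $g$ as a composition of continuous maps: $p\in C([0,T];H^*)$ and $y\in C([0,T];H)$ from the well-posedness of \eqref{eq:forwardproblem} and \eqref{eq:adjointproblem}, and $u_1\in C(0,T;U)$ by hypothesis. Combining the continuity of $f$ with that of $y$ gives continuity of $t\mapsto f(t,y(t))$, hence of the $H$-valued curve $t\mapsto f(t,y(t))(u^*(\tau)-u_1(t))$, and the continuity of the dual pairing against the continuous curve $t\mapsto p(t)$ closes the composition.

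The step I expect to be the main obstacle is precisely the continuity of $t\mapsto f(t,y(t))$ as a map into $\mathfrak{L}(U,H)$. Assumption~\ref{ass:main} supplies Fréchet differentiability of $f(t,\cdot)$ for each fixed $t$, but one must also use (or extract from the well-posedness of the forward and adjoint problems) sufficient regularity of $f$ in the time variable, and then compose with the uniformly continuous curve $y$ on the compact interval $[0,T]$. Should only strong operator continuity of $t\mapsto f(t,y(t))$ be available rather than norm continuity, the argument is easily repaired by splitting
\begin{equation*}
 f(t,y(t))\big(u^*(\tau)-u_1(t)\big)-f(t_0,y(t_0))\big(u^*(\tau)-u_1(t_0)\big)
\end{equation*}
into the term $f(t,y(t))\big(u_1(t_0)-u_1(t)\big)$, controlled by the local boundedness of $\|f(t,y(t))\|_{op}$ and the continuity of $u_1$, and the term $\big[f(t,y(t))-f(t_0,y(t_0))\big]\big(u^*(\tau)-u_1(t_0)\big)$, which vanishes by strong continuity at the fixed argument $u^*(\tau)-u_1(t_0)$. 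This confirms that the only genuine ingredient beyond the already-established regularity is the time-continuity of $f$, everything else reducing to a routine continuity-of-composition argument.
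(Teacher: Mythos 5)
Your proof is correct, and it takes a genuinely different---and more economical---route than the paper's. You treat the statement as an actual corollary of Proposition~\ref{eq:prop2.2}: that proposition supplies $\alpha_d<0$ with $g(\tau)<0$ for the fixed value $u^*(\tau)$, and the conclusion then follows by sign preservation once $g(t)=\la p(t),f(t,y(t))(u^{*}(\tau)-u_1(t))\ra_{H^{*},H}$ is shown continuous in $t$. The paper never invokes Proposition~\ref{eq:prop2.2}; it instead re-runs the quantitative estimate from scratch: it expands $\frac{dJ_1}{d\lambda^+}(t,u^{*}(\tau))$ around $\tau$, bounds the perturbation terms one by one using the $\epsilon$--$\delta$ continuity of $p$, $y$ and $f$ together with the Fr\'echet expansion \eqref{eq:Fr_dif}, isolates the leading negative term $P(\tau)\Pi(\tau)P^{*}(\tau)\alpha_d$, and observes that every remaining term proportional to $|\alpha_d|$ carries a factor $\epsilon$, which leads to an explicit inequality that $\alpha_d$ must satisfy. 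That bookkeeping is what the longer proof buys: since $u^{*}(\tau)$ in \eqref{eq:soll2problem} itself scales with $\alpha_d$, the dangerous terms grow with $|\alpha_d|$, and the paper's bounds make explicit that they are dominated because they are multiplied by the neighborhood-controlled $\epsilon$; one thereby sees how the admissible $\alpha_d$ and the size of $N(\tau)$ interact, information your purely qualitative argument does not provide (your $N(\tau)$ exists but comes with no size estimate and an untracked dependence on $\alpha_d$). Conversely, your route is shorter, cleaner, and honestly isolates the one hypothesis both arguments need beyond well-posedness: continuity of $t\mapsto f(t,y(t))$ in $\mathfrak{L}(U,H)$, which Assumption~\ref{ass:main} does not literally state---the paper simply asserts it in \eqref{eq:cont_f} as ``provided by Assumption~\ref{ass:main}''---and your splitting argument covering the case of mere strong operator continuity is standard and correct. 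Note finally that both proofs share the same tacit nondegeneracy requirement (the leading term must be strictly negative, e.g.\ $P(\tau)\neq 0$ when $u_1\equiv 0$); you inherit it through Proposition~\ref{eq:prop2.2} rather than confronting it directly, which is legitimate.
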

\begin{proof}
Continuity of the state $y(t)$, the corresponding adjoint $p(t)$, and $f(t,y)$ with respect to $t$ provided by Assumption~\eqref{ass:main} gives us
\begin{equation}\label{eq:cont_adjoint}
    \begin{aligned}
    & \forall \epsilon_p > 0, \exists \delta_p, s.t. \forall t \in [0,T]: | t - \tau| \le \delta_p \Longrightarrow \|p(t) - p(\tau)\|_{H^{*}} \le \epsilon_p,
    \end{aligned}
\end{equation}
\begin{equation}\label{eq:cont_f}
    \begin{aligned}
    & \forall \epsilon_f > 0, \exists \delta_f, s.t. \forall t \in [0,T]: | t - \tau| \le \delta_f \Longrightarrow \|f(t,y) - f(\tau,y)\| \le \epsilon_f, 
    \end{aligned}
\end{equation}
\begin{equation}\label{eq:cont_state}
    \begin{aligned}
    & \forall \epsilon_y > 0, \exists \delta_y, s.t. \forall t \in [0,T]: | t - \tau| \le \delta_y \Longrightarrow \|y(t) - y(\tau)\|_{H} \le \epsilon_y. 
    \end{aligned}
\end{equation}
Denote
\begin{equation}\label{eq:def_epsilon_delta}
    \begin{aligned}
    & \bar{y}(t) = y(t) - y(\tau), \quad \delta = \min\{\delta_p, \delta_f, \delta_y\}, \quad \epsilon = \max\{\epsilon_p, \epsilon_f, \epsilon_y \}.\\
    \end{aligned}
\end{equation}
Furthermore, by Assumption~\ref{ass:main}, for every fixed $t \in [0,T]$, we have the expansion
\begin{equation}\label{eq:Fr_dif}
    \begin{aligned}
    & f(t,y + h) = f(t, y) + K(t) h + o(h),\\
    \end{aligned}
\end{equation}
with $\|h\|_H \rightarrow 0$, $K(t)$: $H \to \mathfrak{L} (U,H)$ being a bounded linear operator. Moreover, denote the operator norms $\bar{P} = \| P^{\star}(\tau)P(\tau) \|_{op}$ and $\bar{f} = \|f(t,y(t))\|_{op}$. 
Combining \eqref{eq:soll2problem} and \eqref{eq:migrep} we receive
\begin{equation}\label{eq:bound_mig_1}
\begin{aligned}
& \frac{dJ_1}{d\lambda^+}(t,u_{\text{opt}}(\tau)) = \la p(t), f(t,y(t))\big( \Pi(\tau)[P^{\star}(\tau)P(\tau)u_1(\tau)+P^{\star}(\tau)\alpha_d] - u_1(t)\big) \ra_{H^{*},\; H}\\
&\quad \quad \quad \quad \quad \quad = \la p(t),f(t,y(t))\Pi(\tau) P^{\star}(\tau)\alpha_d \ra_{H^{*},\; H}\\
&\quad \quad \quad \quad \quad \quad + \la p(t),f(t,y(t))\Pi(\tau)P^{\star}(\tau)P(\tau)u_1(\tau) \ra_{H^{*},\; H} - \la p(t),f(t,y(t)) u_1(t) \ra_{H^{*},\; H}. \\
\end{aligned}    
\end{equation}
The second part of the right-hand side can be bounded using the norms of the adjoint and the reference control
\begin{equation}\label{eq:bound1}
\begin{aligned}
& \la p(t),f(t,y(t))\Pi(\tau)P^{\star}(\tau)P(\tau)u_1(\tau) \ra_{H^{*},\; H} - \la p(t),f(t,y(t)) u_1(t) \ra_{H^{*},\; H}\\
&\quad \quad \quad \quad \quad \quad \quad \quad \quad \quad \quad \quad \quad \quad \quad \quad \quad \quad \quad \le \|p(t)\|_{H^{*}} \bar{f} \big( \|\Pi(\tau)\|_{op} \bar{P} \|u_1(\tau)\|_U + \|u_1(t)\|_U \big). \\
\end{aligned}    
\end{equation}
The remaining part of \eqref{eq:bound_mig_1} can be rearranged as
\begin{equation}\label{eq:bound_mig_2}
\begin{aligned}
& \la p(t),f(t,y(t))\Pi(\tau) P^{\star}(\tau)\alpha_d \ra_{H^{*},\; H} = \la p(\tau), f(t,y(t)) \Pi(\tau)P^{\star}(\tau)\alpha_d \ra_{H^{*},\; H}\\
&\quad \quad \quad \quad \quad \quad \quad \quad \quad \quad \quad \quad\quad \quad \quad  + \la p(t) - p(\tau),f(t,y(t))\Pi(\tau) P^{\star}(\tau)\alpha_d \ra_{H^{*},\; H}.
\end{aligned}    
\end{equation}
One of the parts can be directly bounded using \eqref{eq:cont_adjoint}
\begin{equation}\label{eq:bound2}
\begin{aligned}
& \la p(t) - p(\tau),f(t,y(t))\Pi(\tau) P^{\star}(\tau)\alpha_d \ra_{H^{*},\; H} \le \epsilon_p \bar{f} \|\Pi(\tau)\|_{op} \|P^{\star}\|_{op} |\alpha_d|.\\
\end{aligned}    
\end{equation}
The remaining part of \eqref{eq:bound_mig_2} can be rewritten using \eqref{eq:Fr_dif}
\begin{equation}\label{eq:bound_mig_3}
\begin{aligned}
& \la p(\tau), f(t,y(t)) \Pi(\tau)P^{\star}(\tau)\alpha_d \ra_{H^{*},\; H} = \la p(\tau), f(t,y(\tau)) \Pi(\tau)P^{\star}(\tau)\alpha_d \ra_{H^{*},\; H}\\
&\quad \quad \quad \quad \quad \quad \quad \quad \quad \quad \quad \quad\quad \quad \quad  + \la p(\tau),K(t) \bar{y}(t) \Pi(\tau) P^{\star}(\tau)\alpha_d \ra_{H^{*},\; H}\\
&\quad \quad \quad \quad \quad \quad \quad \quad \quad \quad \quad \quad\quad \quad \quad  + \la p(\tau), o( \bar{y}(t)) \Pi(\tau) P^{\star}(\tau)\alpha_d \ra_{H^{*},\; H}.\\
\end{aligned}    
\end{equation}
Last two terms of the right-hand side of \eqref{eq:bound_mig_3} can be bounded using \eqref{eq:cont_state}
\begin{equation}\label{eq:bound3}
\begin{aligned}
&\la p(\tau),K(t) \bar{y}(t) \Pi(\tau) P^{\star}(\tau)\alpha_d \ra_{H^{*},\; H} + \la p(\tau), o( \bar{y}(t)) \Pi(\tau) P^{\star}(\tau)\alpha_d \ra_{H^{*},\; H} \le\\ 
&\quad \quad \quad \quad \quad \quad \quad \quad \quad \quad \quad \quad\quad \quad \quad \le \|p(\tau)\|_{H^{*}}  \big( \|K(t)\|_{op} + \epsilon_y \big)\epsilon_y \|\Pi(\tau)\|_{op} \|P^{\star}\|_{op} |\alpha_d|. 
\end{aligned}    
\end{equation}
Finally, the first term of \eqref{eq:bound_mig_3} can be transformed and bounded using \eqref{eq:cont_f}
\begin{equation}\label{eq:bound4}
\begin{aligned}
&\la p(\tau), f(t,y(\tau)) \Pi(\tau)P^{\star}(\tau)\alpha_d \ra_{H^{*},\; H} = \la p(\tau), f(\tau,y(\tau)) \Pi(\tau)P^{\star}(\tau)\alpha_d \ra_{H^{*},\; H}\\
&\quad \quad \quad \quad \quad \quad \quad \quad \quad \quad \quad \quad\quad \quad \quad + \la p(\tau), (f(t, y(\tau)) - f(\tau,y(\tau))) \Pi(\tau)P^{\star}(\tau)\alpha_d \ra_{H^{*},\; H}\\
&\quad \quad \quad \quad \quad \quad \quad \quad \quad \quad \quad \quad\quad \quad \quad \le \|p(\tau)\|_{H^{*}} \epsilon_f \|\Pi(\tau)\|_{op} \|P^{\star}\|_{op} |\alpha_d| \\
&\quad \quad \quad \quad \quad \quad \quad \quad \quad \quad \quad \quad\quad \quad \quad + P(\tau) \Pi(\tau) P^{\star}(\tau)\alpha_d.
\end{aligned}    
\end{equation}
Combining all previously obtained bounds \eqref{eq:bound1},\eqref{eq:bound2},\eqref{eq:bound3} and \eqref{eq:bound4} with $\epsilon$ defined in \eqref{eq:def_epsilon_delta}, we can bound the mode insertion gradient \eqref{eq:mig}
\begin{equation}\label{eq:finalbound}
    \begin{aligned}
    & \frac{dJ_1}{d\lambda^+}(t,u_{\text{opt}}(\tau)) \le  \|p(t)\|_{H^{*}} \bar{f} \big( \|\Pi(\tau)\|_{op} \bar{P} \|u_1(\tau)\|_U + \|u_1(t)\|_U \big) + \|p(\tau)\|_{H^{*}}\|\Pi(\tau)\|_{op} \|P^{\star}\|_{op} \epsilon |\alpha_d|\\
     &\quad \quad \quad \quad \quad \quad + \|p(\tau)\|_{H^{*}} \|\Pi(\tau)\|_{op} \|P^{\star}\|_{op} \big( \|K(t)\|_{op} + \epsilon \big)\epsilon |\alpha_d|  + \bar{f} \|\Pi(\tau)\|_{op} \|P^{\star}\|_{op} \epsilon |\alpha_d| \\
     &\quad \quad \quad \quad \quad \quad + P(\tau) \Pi(\tau) P^{\star}(\tau)\alpha_d.
    \end{aligned}
\end{equation}
Here we can see, that all the terms on the right-hand side of \eqref{eq:finalbound} which contain $|\alpha_d|$ also contain $\epsilon$, which can be made arbitrary small by the appropriate choice of $\delta$ defined in \eqref{eq:def_epsilon_delta}.
This gives us that for all $\tau \in (0,T)$, there exist a neighborhood $N(\tau) \in (0,T)$ and $\alpha_d < 0$ which has to satisfy the following inequality 
\begin{equation}
    \begin{aligned}
    & P(\tau) \Pi(\tau) P^{\star}(\tau)\alpha_d \le  -\|p(t)\|_{H^{*}} \bar{f} \big( \|\Pi(\tau)\|_{op} \bar{P} \|u_1(\tau)\|_U + \|u_1(t)\|_U \big)\\
    &\quad \quad \quad \quad \quad \quad \quad \quad - \|p(\tau)\|_{H^{*}} \|\Pi(\tau)\|_{op} \|P^{\star}\|_{op} \big( \|K(t)\|_{op} + \epsilon \big)\epsilon |\alpha_d|\\
    &\quad \quad \quad \quad \quad \quad \quad \quad - \|p(\tau)\|_{H^{*}}\|\Pi(\tau)\|_{op} \|P^{\star}\|_{op} \epsilon |\alpha_d|\\
    &\quad \quad \quad \quad \quad \quad \quad \quad - \bar{f} \|\Pi(\tau)\|_{op} \|P^{\star}\|_{op} \epsilon |\alpha_d|, \\
    \end{aligned}
\end{equation}
such that
\begin{equation}
    \begin{aligned}
    & \frac{dJ_1}{d\lambda^+}(t,u_{\text{opt}}(\tau)) \le 0
    \end{aligned}
\end{equation}
for all $t \in N(\tau)$. And this concludes the proof.
\end{proof}

The result in Corollary~\ref{col:2.3} gives us, that for every $\tau \in (0,T)$, for an appropriately chosen coefficient $\alpha_d$, there exists some length $\bar{\lambda}$ of application time of the control $u_{\text{opt}}(\tau)$, found by the SAC principle \eqref{eq:l2problem}, such that for all $t \in [\tau - \frac{\bar{\lambda}}{2},\tau + \frac{\bar{\lambda}}{2}]$, $\frac{dJ_1}{d\lambda^+}(t,u_{\text{opt}}(\tau)) \le 0$. And by using \eqref{eq:mig} we receive that for every fixed point $t \in [\tau - \frac{\bar{\lambda}}{2},\tau + \frac{\bar{\lambda}}{2}]$: 
\begin{equation}
    \begin{aligned}
    &J_1(u_{\bar{\lambda},t,u_{\text{opt}}(\tau)}) \le J_1(u_1).
    \end{aligned}
\end{equation}
This means, that the change in the cost can be locally bounded from above, and controlled by the appropriately chosen $\alpha_d$ and $\bar{\lambda}$. In practice the action duration $\bar{\lambda}$ can be found using a standard line search method. We note that, in addition, practical versions of SAC take into account a small computation time $\Delta_c t$ needed to solve \eqref{eq:soll2problem} numerically as well as additional steps such as computation of efficient application times $\tau$. Moreover, saturation techniques for control constraints can be used in each iteration. These steps can be implemented as proposed in \cite{AnsariMurphey2016} and shall therefore not be discussed here further.

\subsection{Linear systems case}\label{subsec:linear}
In this subsection we specialize the main results of the previous subsection to the case $f(t,y) = B$, with $B$ being a bounded control operator on a control space $U$ with images in $H$, that is, we consider now
\begin{equation}\label{eq:forwardproblemlinear}
\dot{y}(t)=Ay(t) + Bu(t),~u(t) \in U,\quad t \in (0,T), \quad y(0) = y_0,
\end{equation}
with the mild solution 
\begin{equation}\label{eq:forwardproblem-mildlinear}
y(t) = e^{tA} y_0+\int_0^t e^{(t-s)A}Bu(s)\,ds,\quad t \in [0,T]
\end{equation}
for any $u \in \Ut$ and any $y_0 \in H$. Moreover, the adjoint problem \eqref{eq:adjointproblem} simplifies to
\begin{equation}\label{eq:adjointproblemlinear}
\begin{aligned}
 \dot{p}(t) &= -A^* p(t) - (l_1)_y(y(t))\\
 p(T) &= m_y(y(T))
\end{aligned}
\end{equation}
with the corresponding mild solution 
\begin{equation}\label{eq:adjointproblem-mildlinear}
p(t) = e^{(T-t)A^*} p(T)+\int_t^T e^{(s - t)A^*}(l_1)_y (y(s))\,ds,\quad t \in [0,T]. 
\end{equation} 
Using Theorem~\ref{thm:main}, we have that the problem \eqref{eq:l2problem} has an explicit solution given by
\begin{equation}\label{eq:soll2problemlinear}
 u_{\text{opt}}(\tau) = \Pi(\tau)[P^{\star}(\tau)P(\tau)u_1(\tau)+P^{\star}(\tau)\alpha_d]
\end{equation}
for all $\tau \in [0,T]$, with $P \in C([0,T];U^*)$ defined by
\begin{equation}\label{eq:Pdeflinear}
 P(t)\: w \mapsto \la B^*p(t),w\ra_{U^{*},\; U},\quad t \in [0,T].
\end{equation}

\begin{remark}\label{rem:remark}
It is convenient to identify the Hilbert spaces $H^{*}$ and $U^{*}$ with $H$ and $U$, respectively. We identify $p \in C(0,T;H^{*})$ with $\pi \in C(0,T;H)$ so that
\begin{equation*}
	\langle p(t), v\rangle_{H^{*},\; H} = \langle \pi(t), v\rangle_H, \quad \forall t \in [0,T], \forall v \in H.
\end{equation*}
Applying this to the adjoint problem \eqref{eq:adjointproblem-mildlinear}, we receive the following problem in $H$
\begin{equation}\label{eq:Adjoint_Riesz}
\begin{aligned}
 \dot{\pi}(t) &= -A^{\star} \pi(t) - (\tilde{l}_1)_y(y(t))\\
 \pi(T) &= \tilde{m}_y(y(T)),
\end{aligned} 
\end{equation}
where $(\tilde{l}_1)_y(y(t))$ and $\tilde{m}_y(y(T))$ are unique Riesz representations of $(l_1)_y(y(t))$ and $m_y(y(T))$ in H, respectively, and $A^{\star}$ is the Hilbert adjoint of the unbounded operator $A$.
\end{remark}
For simplicity of the notations, we continue writing $p$ for $\pi$ in Sections \ref{sec:linear-quad} and \ref{sec:numerical}. 
\section{The SAC feedback for linear quadratic problems}\label{sec:linear-quad}

In this section we consider the important special case of quadratic stage costs. In this setting, one can derive a linear feedback law that describes 
in first order the dynamics of the closed loop system when controls given by the SAC-principle are continuously applied as the stage problem is shifted from $[0,T]$ to $[t,t+T]$, with $t>0$. 

\subsection{The SAC feedback for quadratic stage costs}\label{subsec:qcost}
Our analysis concerns 
the case of quadratic stage costs of the form
\begin{equation}\label{eq:J1_quadratic}
 \tilde J_1 = \frac12 \int_t^{t+T} \|Q(y(\tau)-y_d)\|_H^2\,d\tau + \frac12\la P_{T}(y(t+T)-y_d,y(t+T)-y_d\ra_H
\end{equation}
subject to \eqref{eq:forwardproblemlinear} with $y_d=0$, $u_1=0$, $Q$ being a bounded linear operator on $H$ and $P_{T}$ a positive and self-adjoint operator on $H$. In the following, we identify the Hilbert spaces $H^{*}$ and $U^{*}$ with $H$ and $U$, respectively (as described in the Remark~\ref{rem:remark}).

\begin{lemma} Let $F(\tau)\colon H \to H$ be a self-adjoint, positive, bounded linear operator  and the mild solution of the differential Lyapunov equation
\begin{equation}\label{eq:ricatti_eq}
\begin{aligned}
 &\dot{F}(\tau) = -A^{\star}F(\tau) - F(\tau)A-Q^{\star}Q,\quad \tau \in (0,T)~\text{a.e.}\\
 &F(T)=P_{T}.
\end{aligned}
\end{equation}
Then for quadratic costs of the form \eqref{eq:J1_quadratic}, it holds
\begin{equation}\label{eq:explicit_p}
 p(\tau) = F(\tau - t)y(\tau),\quad \tau \in [t,t+T].
\end{equation}
\end{lemma}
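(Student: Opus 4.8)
The plan is to identify the candidate $\tilde p(\tau) := F(\tau-t)y(\tau)$ with the unique mild solution $p$ of the adjoint equation by verifying that $\tilde p$ satisfies the same mild integral equation. First I would compute the Fr\'echet derivatives of the stage costs in \eqref{eq:J1_quadratic}: since $l_1(y) = \frac12\|Qy\|_H^2$ and $m(y) = \frac12\la P_T y, y\ra_H$ with $P_T$ self-adjoint, one gets $(l_1)_y(y) = Q^*Qy$ and $m_y(y) = P_Ty$. Hence on the shifted horizon $[t,t+T]$ the adjoint problem \eqref{eq:adjointproblemlinear} reads $\dot p(\tau) = -A^*p(\tau) - Q^*Qy(\tau)$ with terminal value $p(t+T) = P_Ty(t+T)$, and its mild solution is $p(\tau) = e^{(t+T-\tau)A^*}P_Ty(t+T) + \int_\tau^{t+T} e^{(s-\tau)A^*}Q^*Qy(s)\,ds$. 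Because the reference control is $u_1 \equiv 0$, the state evolves under the free semigroup, so $y(s) = e^{(s-\tau)A}y(\tau)$ for all $s \ge \tau$; this is the structural fact that makes the representation collapse.

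Next I would record the mild form of the differential Lyapunov equation \eqref{eq:ricatti_eq}. Solving it backward from $F(T)=P_T$ by the variation-of-constants principle for the Sylvester-type term $-A^*F - FA$ yields the sandwiched representation $F(\sigma) = e^{(T-\sigma)A^*}P_T e^{(T-\sigma)A} + \int_\sigma^T e^{(r-\sigma)A^*}Q^*Q e^{(r-\sigma)A}\,dr$ for $\sigma \in [0,T]$; differentiating this expression and using Leibniz's rule (the boundary term at $r=\sigma$ producing exactly $-Q^*Q$) confirms that it solves \eqref{eq:ricatti_eq}. I note in passing that the self-adjointness and positivity of $F$ asserted in the statement are immediate from this formula, since $(e^{sA})^* = e^{sA^*}$ and both $P_T$ and $Q^*Q$ are self-adjoint and positive, so that $\la F(\sigma)x,x\ra_H = \la P_T e^{(T-\sigma)A}x,e^{(T-\sigma)A}x\ra_H + \int_\sigma^T \|Q e^{(r-\sigma)A}x\|_H^2\,dr \ge 0$.

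The verification then amounts to evaluating $F(\tau-t)y(\tau)$ with $\sigma = \tau - t$ and matching it termwise against the mild adjoint. For the terminal part, $e^{(T-\sigma)A}y(\tau) = e^{((t+T)-\tau)A}e^{(\tau-t)A}y(t) = e^{TA}y(t) = y(t+T)$, so $e^{(T-\sigma)A^*}P_T e^{(T-\sigma)A}y(\tau) = e^{((t+T)-\tau)A^*}P_Ty(t+T)$, which is exactly the first term of $p(\tau)$. For the integral part, the substitution $s = r+t$ sends $[\sigma,T]$ to $[\tau,t+T]$ and turns $r-\sigma$ into $s-\tau$; combined with $e^{(s-\tau)A}y(\tau) = y(s)$ this reproduces $\int_\tau^{t+T} e^{(s-\tau)A^*}Q^*Qy(s)\,ds$. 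Thus $\tilde p$ satisfies the mild adjoint equation, and uniqueness of its solution, guaranteed under Assumption~\ref{ass:main}, gives $p(\tau) = F(\tau-t)y(\tau)$ on $[t,t+T]$.

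The main obstacle is rigor rather than bookkeeping: because $A^*$ is unbounded, the pointwise differentiation of the sandwiched formula for $F$ is only licit on $D(A^*)$, so the cleanest route is to stay entirely at the level of mild solutions as above and invoke the uniqueness already established in the excerpt, rather than to manipulate $\dot p$ directly. One should also confirm that the mild Lyapunov representation is well defined as a bounded operator for each $\sigma$, the integral converging in operator norm since the integrand is norm-bounded on the compact interval by strong continuity of the semigroup, and keep in mind that the identity $y(s) = e^{(s-\tau)A}y(\tau)$ genuinely requires $u_1 \equiv 0$; for a nonzero reference control the extra Duhamel term would have to be carried along and the feedback representation would no longer be exact.
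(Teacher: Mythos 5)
Your proposal is correct and follows essentially the same route as the paper's proof: both compute $(l_1)_y = Q^*Q$, $m_y = P_T\cdot$, write the mild adjoint solution with the freely evolving state (using $u_1 \equiv 0$), invoke the sandwiched integral representation of the mild Lyapunov solution, and match the two expressions termwise via a time-shift change of variables. Your version is in fact slightly more careful than the paper's (which cites Bensoussan for the Lyapunov representation and is somewhat loose with the shifted-time notation $y_t$), notably in flagging that differentiation is only legitimate on $D(A^*)$ and in making the uniqueness step explicit.
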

\begin{proof}
For \eqref{eq:J1_quadratic} $m_y(y(t+T)) = P_{T}y(t+T)$ and $(l_1)_y(y(\tau)) = Q^{\star}Qy(\tau)$. By \eqref{eq:adjointproblemlinear} and \eqref{eq:ricatti_eq} it holds that $p(t+T) = P_{T} y(t+T) = F(T)y(t+T)=F(t+T-t)y(t+T)$. Furthermore, for any $y \in L^2(t,t+T;H)$, a mild solution of \eqref{eq:ricatti_eq} is given by
\begin{equation}\label{eq:Ricatti_mild}
 F(\tau - t)y = \int_{\tau - t}^{T} e^{(\sigma-\tau + t)A^{\star}}Q^{\star}Qe^{(\sigma - \tau + t)A}y\,d\sigma+e^{(T-\tau + t )A^{\star}}P_{T}e^{(T-\tau + T)A}y,
\end{equation}
see \cite[Part IV, Section~2]{Bensoussan2007}. Now, rewriting \eqref{eq:adjointproblem-mildlinear} using \eqref{eq:J1_quadratic} and \eqref{eq:forwardproblem-mildlinear} with control $u_1 = 0$, we obtain
\begin{equation}\label{eq:p_mild_ric}
 p(\tau) = e^{(t+T-\tau)A^{\star}} P_{T} e^{(t+T)A} y_t + \int_\tau^{t+T} e^{(s - \tau)A^{\star}}Q^{\star}Q e^{sA} y_t\,ds,\quad \tau \in [t,t+T] 
\end{equation}

\begin{equation}\label{eq:Ricatti_mild2}
	\begin{aligned}
 	F(\tau - t)y(\tau) = \int_{\tau - t}^{T} e^{(\sigma-\tau+t)A^{\star}}Q^{\star}Qe^{(\sigma-\tau+t)A}e^{\tau A}y_t\,d\sigma+e^{(T-\tau+t)A^{\star}}P_{T}e^{(T-\tau+t)A}e^{\tau A}y_t \\
 	 = \int_{\tau}^{t+T} e^{(s - \tau)A^{\star}}Q^{\star}Q e^{s A} y_t\,d\sigma + e^{(T-\tau+t)A^{\star}}P_{T}e^{(T+t)A}y_t,\quad 	\tau \in [t,t + T] .
	\end{aligned}
\end{equation}
Equality of the right-hand-sides of \eqref{eq:p_mild_ric} and \eqref{eq:Ricatti_mild2} along with the initial conditions gives us \eqref{eq:explicit_p}.
\end{proof}

Using the Lemma above and taking $p(\tau)$ at point $t$, with $\overline{F} = F(0)$ we have

$$ p(t) = \overline{F}y(t) = e^{T A^{\star}} P_{T} e^{TA} y(t) + \int_t^{t+T} e^{(s - t)A^{\star}}Q^{\star}Q e^{(s-t)A} y(t)\,ds $$

$$ = e^{T A^{\star}} P_{T} e^{TA} y(t) + \int_0^{T} e^{sA^{\star}}Q^{\star}Q e^{sA} y(t)\,ds.$$
Furthermore, incorporating $u_{\text{opt}}$ given by \eqref{eq:soll2problemlinear} into \eqref{eq:forwardproblemlinear} we receive
\begin{equation}\label{eq:forwardproblemlinearclosed}
\begin{aligned}
& \dot{y}(t)=Ay(t) + G(y),\quad t \in (0,T), \quad y(0) = y_0\\
& G(y) = B [(P^{\star}(t)P(t)+R^{\star})(y)]^{-1}B^{\star} \bar{F}y(t)\alpha_d, \quad G: H \rightarrow H.
\end{aligned}
\end{equation}

\begin{theorem}\label{eq:corSAC1storder}
For \eqref{eq:J1_quadratic}, the following statements hold.

\begin{enumerate}
\item The SAC action satisfies
\begin{equation}\label{eq:SAC1storder}
u_{\text{opt}}(t)=\alpha_d(R^{\star})^{-1}B^{\star}\overline{F}y(t)+o(\|y\|_H)~				\quad \text{for}~\|y\|_H \to 0.
\end{equation}
\item The nonlinear operator G from \eqref{eq:forwardproblemlinearclosed} is Fr\'{e}chet differentiable.
\item A linearization at the equilibrium of a continuous application of controls computed by SAC is a system under linear feedback
\begin{equation}\label{eq:closedloop1storder}
\begin{aligned}
 &\dot{y}=\left(A+\alpha_dB(R^{\star})^{-1}B^{\star}\overline{F}\right) y(t),\quad t \geq 0,\\
 &y(0)=y_0.
\end{aligned} 
\end{equation}

\end{enumerate}

\end{theorem}
\begin{proof}

\begin{enumerate}
\item From \eqref{eq:Pdef}, we have that for all $v,w \in U$
\begin{equation*}
 (P^{\star}(t)P(t)v)w = \la P^{\star}(t)P(t)v,w\ra_U = \la P(t)v, P(t) w\ra_{\mathbb{R}} =\la p(t),Bv\ra_H \la p(t),Bw\ra_H,
\end{equation*}
and 
\begin{equation*}
 \la p(t),Bv\ra_H = \la \overline{F}y(t),Bv\ra_H \leq \|\overline{F} y(t)\|_H \|B\|_{op} \|v\|_U.
\end{equation*}
Hence, the self-adjoint operator $P^{\star}(t)P(t)$ depends quadratically on $y$.
Moreover, using that $F$ as a solution of \eqref{eq:ricatti_eq} is independent of $y$, we have
\begin{equation*}
 \|\overline{F}y(t)\|_H \|B\|_{op} \leq C_{B^{\star}}\|\overline{F}y(t)\|_H \leq C_{B^{\star}}C_{F}\|y(t)\|_H, \quad y \in L^2(0,T;H)
\end{equation*}
for some constants $C_{B^{\star}},C_F>0$. The inverse operator \eqref{eq:soll2problem} can be written as
\begin{equation*}
(P^{\star}(t)P(t)+R^{\star})^{-1} = (R^{\star})^{-1} - (R^{\star})^{-1}P^{\star}(t)P(t)(P^{\star}(t)P(t)+R^{\star})^{-1}.
\end{equation*}
With the above, we get from \eqref{eq:soll2problemlinear}
\begin{equation}\label{eq:firstorderest}
\begin{aligned}
 u_{\text{opt}}(t) = (P^{\star}(t)P(t)+R^{\star})^{-1}P^{\star}(t)\alpha_d = (R^{\star})^{-1}P^{\star}(t)\alpha_d+o(\|y\|_H)
\end{aligned}
\end{equation}
for $\|y\|_H$ sufficiently small. Using \eqref{eq:Pdeflinear} in \eqref{eq:firstorderest} yields \eqref{eq:SAC1storder}.

\item Now, we consider the nonlinear part of \eqref{eq:forwardproblemlinearclosed}, $G(y) = B [(P^{\star}(t)P(t)+R^{\star})(y)]^{-1}B^{\star} \bar{F}y(t)\alpha_d$. The operator $(P^{\star}(t)P(t)+R^{\star})(y): u \in U \mapsto \alpha_d \la B^{\star}\bar{F}y, u \ra_U B^{\star} \bar{F}y + R^{\star}u \in H$ is Fr\'{e}chet differentiable with respect to $u \in U$, furthermore, the operator has a bounded inverse (see Th.~\ref{thm:main}). Thus, using the Inverse Function Theorem (see \cite{Invers_diff}), the inverse operator $[(P^{\star}(t)P(t)+R^{\star})(y)]^{-1}$ is Fr\'{e}chet differentiable as an operator on $u$. 

Furthermore, for any fixed $u \in U$, $(P^{\star}(t)P(t)+R^{\star})(y)u = \alpha_d \la B^{\star}\bar{F}y, u \ra_U B^{\star} \bar{F}y + R^{\star}u$ is Fr\'{e}chet differentiable as a function of $y$. So due to the invertibility of the operator $(P^{\star}(t)P(t)+R^{\star})(y)$, using \cite[Theorem 2]{Potthast} we get that $[(P^{\star}(t)P(t)+R^{\star})(y)]^{-1}u$ is Fr\'{e}chet differentiable as a function of $y$. 

Combining these results and the chain rule, we obtain that the operator $G(y) = B [(P^{\star}(t)P(t)+R^{\star})(y)]^{-1}B^{\star} \bar{F}y(t)\alpha_d$ is Fr\'{e}chet differentiable as a function of $y$. 

\item Using the result from the first statement, by using \eqref{eq:firstorderest} in \eqref{eq:forwardproblemlinear}, we directly get a linearization at the equilibrium of a continuous application of controls computed by SAC, which is a system under linear feedback
\begin{equation*}
\begin{aligned}
 &\dot{y}=\left(A+\alpha_dB(R^{\star})^{-1}B^{\star}\overline{F}\right) y(t),\quad t \geq 0,\\
 &y(0)=y_0.
\end{aligned} 
\end{equation*}

\end{enumerate}
\end{proof}


For given $A, B, Q, P_T,$ and $R$, the closed-loop system \eqref{eq:closedloop1storder} can be analyzed for asymptotic and exponential stability. For instance, it is well known that if under the above settings, the operator $A + DG(0)$, which is the infinitesimal generator of a $C_0$-semigroup $T(t)$, satisfies
$$ \lim_{t \rightarrow \infty} t^{-1} \log \|T(t)\| < 0,$$
then $y_d = 0$ is exponentially asymptotically stable.   See  \cite[Corollary 2.2.]{Kato}. In the parabolic case with asymptotic stability of the linearization, we can use results from \cite[Theorem 5.1.1]{Henry} to get asymptotic stability of the equilibrium of the original system in the appropriate space. We do this prototypically for a selected example concerning an unstable parabolic problem in the next subsection. Moreover, for reaction-diffusion equations and for wave equations, linearization and stability analysis can be done using the appropriate generalizations of the Hartman-Grobman theorem (see e.g., \cite{Lu1991} and \cite{RodriguesRuasFilho1997}, \cite{HG2016}, respectively).

\subsection{Unstable heat equation}\label{subsec:heateq}
In this subsection, we consider the stabilization of the one-dimensional reaction-diffusion process
\begin{equation}\label{eq:1dproblem}
\begin{aligned}
& y_t(t,x) = y_{xx}(t,x) + \mu y(t,x) + \sqrt{\beta}\chi_{(a,b)}(x) u(t,x) , \quad \text{on} \quad \Omega_t := [0,\infty) \times (0,L),\\
& y(t,0)=y(t,L)=0, \quad y(0,x)=y_0(x)
\end{aligned}
\end{equation}
at $y_d=0$ with the quadratic stage costs 
\begin{equation}\label{eq:1dcost}
 \tilde J_1 = \frac12 \int_0^T \int_0^L (\bar{q} (y(t,x)-y_d(x)))^2\,dx\,dt
\end{equation}
for real constants $\beta,\bar{q}>0$, $\chi_{(a,b)}(x) = 1$, for $x\in (a,b)$ and $0$ otherwise, $\mu > \left(\frac{\pi}{L}\right)^2$, $(a,b) \subset (0,L)$ with the SAC framework of Section~\ref{sec:framework}. Here we take $\mu$ larger then the smallest eigenvalue of $(-\Delta)$. We note that the solution of \eqref{eq:1dproblem} without control, i.e., $u(t)=u_1(t)=0$, is exponentially unstable.
The stabilization of \eqref{eq:1dproblem} with classical MPC schemes was investigated in \cite{AltmuellerGruene2012}. 

With the spaces $H = L^2(0,L)$, $U = L^2(0,L)$, the operators $A$ and $B$ defined by $Ay=\mu y + \Delta y$ for $y \in D(A) = H_0^1(0,L)\cap H^2(0,L)$ and $Bu=\sqrt{\beta}u$ for $u \in U$,
the control problem \eqref{eq:1dproblem} can be written as 
\begin{equation*}
\dot{y}(t)=Ay(t) + Bu(t),~t \in (0,T),\quad y(0) = y_0,
\end{equation*}
see, e.g., \cite{Bensoussan2007}. With $P_T=0$, $Qy=\bar{q}y$, the cost function \eqref{eq:1dcost} has the quadratic form \eqref{eq:J1_quadratic}. Moreover, for the SAC principle, 
we choose $R$ in \eqref{eq:l2problem} as the identity in $H$. Then, the closed-loop system \eqref{eq:closedloop1storder} becomes
\begin{equation}\label{eq:1dcase}
\begin{aligned}
 & y_t = Ay + \alpha_d \beta \overline{F}_q y, \quad \text{on} \quad \Omega_t := [0,\infty) \times (0,L), \quad y(t,0)=y(t,L)=0, \quad y(0,x)=y_0(x)\\
\end{aligned}
\end{equation}
with $\overline{F}_{q}$ as a solution of the Riccati equation \eqref{eq:ricatti_eq} given by
\begin{equation}\label{eq:defFbarq}
 \overline{F}_{q}y(t)=\int_0^T e^{\sigma A^{\star}}Q^{\star}Qe^{\sigma A}y(t)\,d\sigma=\int_0^T \bar{q}^2\left(e^{\sigma A}\right)^2 y(t)\,d\sigma,
\end{equation}
from \eqref{eq:Ricatti_mild} and using that $A$ is self-adjoint.

In the following we analyze the closed-loop system \eqref{eq:1dcase} with control $u$ on the full space $(a,b) = (0,L)$. We provide computational studies in Section~\ref{sec:numerical} for the control on both full and subdomain. First, we characterize solutions of the closed-loop system \eqref{eq:1dcase} using a product approach.
\begin{lemma}\label{lem:solcharac}
Let $\phi_k$, $k=1,...,\infty$, be eigenfunctions, $D_k$ being eigenvalues of the Dirichlet-Laplace operator $\Delta y$ on $D(A) \subset H$ and assume that $\mu \neq -D_k$ for all $k \in \mathbb{N}$. Furthermore, let 
$$\alpha_k(t) = \chi_k e^{\left((\mu + D_k) - \frac{\alpha_d \beta \bar{q}^{2}}{2(\mu + D_k)} + \frac{\alpha_d \beta \bar{q}^{2}}{2(\mu + D_k)} e^{2 T (\mu + D_k)} \right)t }$$ 
with $\alpha_k(0) = \chi_k$ and constants $\chi_k := \langle y_0, \phi_k \rangle_{L^2(0,L)}$, $k=1,...,\infty$. Then, the solution $y$ of \eqref{eq:1dcase} is within the set of functions 
\begin{equation}\label{prod_approach}
\begin{aligned}
 & y(t,x) = \sum_{k=1}^{\infty} \alpha_k \phi_k, \quad x \in (0,L),~ t \in [0,\infty), \\
\end{aligned}
\end{equation} 
for which the sum converges.
\end{lemma}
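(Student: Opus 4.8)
The plan is to diagonalize the closed-loop operator $A+\alpha_d\beta\overline{F}_q$ in the basis of Dirichlet eigenfunctions and reduce \eqref{eq:1dcase} to scalar ODEs. First I would observe that since $A=\mu I+\Delta$ with $\Delta\phi_k=D_k\phi_k$, each $\phi_k$ is an eigenfunction of $A$ with eigenvalue $\mu+D_k$, so that $e^{\sigma A}\phi_k=e^{\sigma(\mu+D_k)}\phi_k$. Inserting this into the integral representation \eqref{eq:defFbarq} of $\overline{F}_q$ and using the self-adjointness of $A$ gives
\[
\overline{F}_q\phi_k=\bar{q}^2\Bigl(\int_0^T e^{2\sigma(\mu+D_k)}\,d\sigma\Bigr)\phi_k=\bar{q}^2\,\frac{e^{2T(\mu+D_k)}-1}{2(\mu+D_k)}\,\phi_k,
\]
where the $\sigma$-integral is finite precisely because the hypothesis $\mu\neq-D_k$ guarantees $\mu+D_k\neq0$. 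Hence $\phi_k$ is a simultaneous eigenfunction of $A$ and $\overline{F}_q$, and therefore of the full closed-loop operator, with eigenvalue
\[
\lambda_k:=(\mu+D_k)+\alpha_d\beta\bar{q}^2\,\frac{e^{2T(\mu+D_k)}-1}{2(\mu+D_k)}.
\]

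Next I would make the product ansatz $y(t,x)=\sum_{k=1}^\infty\alpha_k(t)\phi_k(x)$ and substitute it into \eqref{eq:1dcase}. Using linearity together with the eigenrelations just established, and comparing coefficients against the orthonormal eigensystem $\{\phi_k\}$, the equation decouples into the independent scalar linear ODEs $\dot\alpha_k(t)=\lambda_k\alpha_k(t)$. Solving each gives $\alpha_k(t)=\alpha_k(0)\,e^{\lambda_k t}$, and expanding $\lambda_k$ as above reproduces exactly the exponent stated in the lemma. The initial condition $y(0,\cdot)=y_0$ is matched by setting $\alpha_k(0)=\langle y_0,\phi_k\rangle_{L^2(0,L)}=\chi_k$, which is the $k$th Fourier coefficient of $y_0$ in the eigenbasis, so that completeness of $\{\phi_k\}$ recovers $y_0$ at $t=0$.

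The main obstacle is the rigorous justification of the term-by-term manipulations rather than the formal computation. Since $A$ is unbounded, applying it inside the infinite sum and interchanging it with the $\sigma$-integral defining $\overline{F}_q$ requires control on the series: one must verify that $\sum_{k=1}^\infty\alpha_k(t)\phi_k$ lies in $D(A)$ and converges in $H$ in a sense strong enough to differentiate in $t$. This is exactly why the statement restricts to the subset of product-form functions for which the sum converges. For $t>0$ this is harmless, since $D_k\to-\infty$ forces $\lambda_k\to-\infty$ and the high modes decay faster than any exponential, making the tail smooth; at $t=0$ convergence reduces to the $L^2$ expansion of $y_0$. I would therefore close the argument by noting that every finite truncation $\sum_{k=1}^{N}\alpha_k(t)\phi_k$ is an exact solution of the linear equation \eqref{eq:1dcase}, and that on the convergence set the limit satisfies the equation in the mild sense, so that the explicit family \eqref{prod_approach} indeed characterizes the closed-loop solution.
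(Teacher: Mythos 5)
Your proof is correct and follows essentially the same route as the paper: expansion in the Dirichlet eigenbasis reduces \eqref{eq:1dcase} to decoupled scalar ODEs $\dot{\alpha}_k(t)=\lambda_k\alpha_k(t)$ whose explicit solution gives exactly the exponent in the lemma. If anything, your version is slightly tidier: you diagonalize $\overline{F}_q$ on the eigenfunctions and compare coefficients, whereas the paper divides by $\phi_k(x)$ (formally delicate at the interior zeros of $\phi_k$), and you make explicit the convergence and mild-solution caveats that the paper leaves implicit.
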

\begin{proof}
In the operator form the semigroup $e^{T A}$ acts on the function $\phi_k(\cdot)$ as $e^{T (\mu + D_k)} \phi_k (\cdot)$, see, e.g., \cite{EngelNagel2000}. Hence, using \eqref{eq:defFbarq} and the ansatz \eqref{prod_approach}, 
we obtain from \eqref{eq:1dcase} the equation
\begin{equation}\label{explicit_product_1}
\begin{aligned}
 & \dot{\alpha}_k (t) \phi_k(x) = \alpha_k (t) \phi_{k}'' (x) + \mu \alpha_k (t) \phi_k (x) + \alpha_d \beta \int_0^{T} \alpha_k (t) \bar{q}^{2} e^{2 \sigma (\mu + D_k)} \phi_k (x) d\sigma.
\end{aligned}
\end{equation} 
Here, $\phi_k$ represents the spectral decomposition of $\phi$ in $L^2 (0,L)$. Now dividing \eqref{explicit_product_1} by $\phi_k(x)$ and substituting $\frac{\phi_{k}''(x)}{\phi_k(x)}$ by $D_k$ we 
receive a first order ODE 
\begin{equation}\label{ord_dif_eq}
\dot{\alpha}_k (t) = (\mu + D_k)\alpha_k(t) + \alpha_d \beta \bar{q}^{2} \alpha_k (t) \int_0^{T} e^{2\sigma(\mu + D_k)} d\sigma.
\end{equation} 
By computing the integral explicitly and rearranging terms, we get
\begin{equation*}
\begin{aligned}
\dot{\alpha}_k (t) 
&=\alpha_k(t)\left( \left(\mu + D_k \right) + \frac{\alpha_d \beta \bar{q}^{2}}{2(\mu + D_k)} e^{2T (\mu + D_k)} - \frac{\alpha_d \beta \bar{q}^{2}}{2(\mu + D_k)} \right).
\end{aligned}
\end{equation*} 
Now define $\delta_k = (\mu + D_k)$ and $\kappa_k = \frac{\alpha_d \beta \bar{q}^{2}}{2(\mu + D_k)}$. With this, we obtain 
\begin{equation}\label{ord_dif_eq_3}
\dot{\alpha}_k (t) = \alpha_k(t)\left( \delta_k + \kappa_k e^{2T\delta_k} - \kappa_k \right).
\end{equation} 
Using the initial value $\alpha_k(0) = \chi_k$ we can solve \eqref{ord_dif_eq_3}:
\begin{equation*}
\int_{\chi_k}^{\alpha_k(t)} \frac{d\alpha_k}{\alpha_k} = \int_0^t \left( \delta_k + \kappa_k e^{2T\delta_k} - \kappa_k \right) ds,
\end{equation*}
\begin{equation*}
\ln (\alpha_k(t)) = \ln (\chi_k) + \left(\delta_k + \kappa_k e^{2T\delta_k} - \kappa_k \right)t.
\end{equation*}
So the solution of the equation \eqref{ord_dif_eq} is given by
\begin{equation*}
\alpha_k(t) = \chi_k e^{\left(\delta_k + \kappa_k e^{2T\delta_k} - \kappa_k \right)t}.
\end{equation*}
Using that $\phi_k$, $k=1,\ldots,\infty$, is a basis of $H$ concludes the proof. 
\end{proof}

The following result concerns the asymptotic stability of the closed-loop system \eqref{eq:1dcase} for $t \to \infty$ in dependency of the most important parameters $T,\bar{q},\beta$ and $\alpha_d$
of the SAC-principle.

\begin{theorem}\label{thm:stabclosedloopfirstorder}
Let $D_k$ being the eigenvalues of the Dirichlet-Laplace operator $\Delta y$ on $D(A) \subset H$ and assume that $\mu \neq -D_k$ for all $k \in \mathbb{N}$. Then for any $T, \bar{q}, \beta > 0$, there exists $\bar{\alpha}_d < 0$ such that the closed-loop system \eqref{eq:1dcase} is asymptotically stable in $y_d=0$ for any $\alpha_d \leq \bar{\alpha}_d$.
\end{theorem}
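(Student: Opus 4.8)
The plan is to reduce the stability question to a scalar sign condition on the mode-wise growth rates furnished by Lemma~\ref{lem:solcharac}, and then to separate the finitely many unstable Fourier modes from the infinitely many stable ones.

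First I would write the solution of \eqref{eq:1dcase} in the modal form $y(t,x)=\sum_k \alpha_k(t)\phi_k(x)$ with $\alpha_k(t)=\chi_k e^{E_k t}$ and growth rate
\[
 E_k = \delta_k + \kappa_k\big(e^{2T\delta_k}-1\big) = \delta_k + \frac{\alpha_d\beta\bar q^2}{2\delta_k}\big(e^{2T\delta_k}-1\big),
\]
using the notation $\delta_k=\mu+D_k$ and $\kappa_k=\frac{\alpha_d\beta\bar q^2}{2\delta_k}$ from Lemma~\ref{lem:solcharac}. Since $D_k=-(k\pi/L)^2\to-\infty$ and $\mu>0$, the assumption $\mu\neq -D_k$ guarantees $\delta_k\neq 0$, and there are only finitely many indices with $\delta_k>0$ (the unstable modes), namely those with $k<L\sqrt{\mu}/\pi$; all remaining modes satisfy $\delta_k<0$. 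Asymptotic, indeed exponential, stability follows once I show $\sup_k E_k<0$.

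Second, I would treat the two cases by a sign analysis. For a stable mode ($\delta_k<0$) and any $\alpha_d<0$ one checks that $\kappa_k>0$ while $e^{2T\delta_k}-1<0$, so the control contribution $\kappa_k(e^{2T\delta_k}-1)$ is negative and reinforces the already negative $\delta_k$; hence $E_k<0$ automatically, and taking $\alpha_d$ more negative only decreases $E_k$ further. For an unstable mode ($\delta_k>0$) the control contribution is again negative, and $E_k<0$ is equivalent to the explicit threshold
\[
 |\alpha_d| > \frac{2\delta_k^2}{\beta\bar q^2\,(e^{2T\delta_k}-1)}.
\]
Because there are finitely many unstable modes, I can set $\bar\alpha_d=-\max_{k:\,\delta_k>0}\frac{2\delta_k^2}{\beta\bar q^2(e^{2T\delta_k}-1)}$, which is finite and strictly negative for every fixed $T,\bar q,\beta>0$; then every $\alpha_d\le\bar\alpha_d$ renders all $E_k<0$. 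To upgrade this mode-wise decay to asymptotic stability in $H$, I would verify that $E_k\to-\infty$ as $k\to\infty$: since $e^{2T\delta_k}\to 0$ and $1/\delta_k\to 0$, the control term vanishes and $E_k\sim\delta_k\to-\infty$. Together with $E_k<0$ for every $k$, this gives $\sup_k E_k=\max_k E_k=:-\omega<0$, whence $\|y(t)\|_H^2=\sum_k|\chi_k|^2 e^{2E_k t}\le e^{-2\omega t}\|y_0\|_H^2$.

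The main obstacle I anticipate is the case distinction itself, namely confirming that the single feedback gain $\alpha_d$ designed to overcome the finitely many unstable modes never destabilizes any of the infinitely many stable ones. The delicate point is the sign bookkeeping in the stable case, where both $\kappa_k$ and $e^{2T\delta_k}-1$ flip sign relative to the unstable case, together with the requirement that the threshold be met \emph{uniformly} by one $\bar\alpha_d$; this hinges entirely on the finiteness of the unstable spectrum and on the uniform decay $\sup_k E_k<0$ rather than merely $E_k<0$ for each $k$.
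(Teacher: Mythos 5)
Your proposal follows essentially the same route as the paper: both use the modal characterization of Lemma~\ref{lem:solcharac}, perform the same sign analysis on $\delta_k=\mu+D_k$ (stable modes remain stable for any $\alpha_d<0$ because $\kappa_k>0$ and $e^{2T\delta_k}-1<0$; unstable modes impose an explicit threshold on $|\alpha_d|$), and exploit that only finitely many modes are unstable. Two of your refinements are actually slightly cleaner than the paper's write-up: you use Parseval to get $\|y(t)\|_H^2=\sum_k|\chi_k|^2e^{2E_kt}$ instead of the paper's triangle-inequality bound $\|y(t)\|_H\le\sum_k|\alpha_k(t)|\,\|\phi_k\|_H$, and you justify the uniform decay $\sup_k E_k<0$ by observing $E_k\to-\infty$, a point the paper handles instead by demanding the stronger mode-wise bound $E_k\le C<0$ with $C=-\min\{|\mu+D_k|:k\in\mathbb{N}\}$.

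There is, however, one genuine (though easily repaired) flaw at the boundary. Your threshold analysis correctly shows that an unstable mode requires the \emph{strict} inequality $|\alpha_d|>\frac{2\delta_k^2}{\beta\bar q^2(e^{2T\delta_k}-1)}$, but you then define $\bar\alpha_d=-\max_{k:\,\delta_k>0}\frac{2\delta_k^2}{\beta\bar q^2(e^{2T\delta_k}-1)}$. At $\alpha_d=\bar\alpha_d$ the maximizing mode satisfies the threshold with equality, which gives $E_k=0$ exactly: that mode neither grows nor decays, so the closed loop is only marginally stable, while the theorem asserts asymptotic stability for \emph{any} $\alpha_d\le\bar\alpha_d$, including $\alpha_d=\bar\alpha_d$ itself. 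The fix is to build in a strict margin, e.g.\ take $\bar\alpha_d$ strictly below your maximum (say $\bar\alpha_d=-(1+\varepsilon)\max_k(\cdot)$ for any $\varepsilon>0$), or, as the paper does, derive the thresholds from the condition $E_k\le C<0$ rather than $E_k<0$, which automatically pushes the admissible $\alpha_d$ strictly past the critical value.
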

\begin{proof}
We use the characterization of solutions to the closed-loop system~\eqref{eq:1dcase} from \eqref{lem:solcharac}. From $\|y(t,x)\|_H \leq \sum_{k=1}^\infty |\alpha_k(t)|\|\phi_k\|_H$, we want to show that $|\alpha_k(t)| \to 0$ uniformly in $k$ for $t \to \infty$. To this end, and noting that, for all $k=1,\ldots,\infty$, the coefficients $\alpha_k$ are continuously differentiable for all $t$, we take a look on the derivative of $\alpha_k$
\begin{equation*}
\begin{aligned}
& \dot{\alpha}_k (t) = \chi_k \left((\mu + D_k) - \frac{\alpha_d \beta \bar{q}^{2}}{2(\mu + D_k)}  + \frac{\alpha_d \beta \bar{q}^{2}}{2(\mu + D_k)} e^{2 T (\mu + D_k)} \right) e^{\left((\mu + D_k) - \frac{\alpha_d \beta \bar{q}^{2}}{2(\mu + D_k)}  + \frac{\alpha_d \beta \bar{q}^{2}}{2(\mu + D_k)} e^{2 T (\mu + D_k)} \right) t }\\
& \dot{\alpha}_k (t) = \alpha_k (t) \left((\mu + D_k) - \frac{\alpha_d \beta \bar{q}^{2}}{2(\mu + D_k)}  + \frac{\alpha_d \beta \bar{q}^{2}}{2(\mu + D_k)} e^{2 T (\mu + D_k)} \right).
\end{aligned}
\end{equation*}
The term $\frac{\dot{\alpha}_k(t)}{\alpha_k(t)} $ must be less than zero, so that the function $|\alpha_k(\cdot)|$ decreases close to zero. 
To guarantee this we require
\begin{equation}\label{stab_cond}
\left((\mu + D_k) - \frac{\alpha_d \beta \bar{q}^{2}}{2(\mu + D_k)}  + \frac{\alpha_d \beta \bar{q}^{2}}{2(\mu + D_k)} e^{2 T (\mu + D_k)} \right) \leq C < 0,
\end{equation}  
where $C = - \min\{|\mu + D_k| : k \in \mathbb{N}\}$ is independent of $k$.

Consider we first the case when $(\mu + D_k) < 0$. Then reorganizing \eqref{stab_cond} and multiplying both sides by $2(\mu + D_k)$ we obtain
\begin{equation*}
\alpha_d \beta \bar{q}^{2}\left(e^{2 T(\mu + D_k)} - 1 \right) > -2(\mu + D_k)^2 + C(\mu + D_k).
\end{equation*}
From $(\mu + D_k) < 0$, we have that $\left(e^{2 T(\mu + D_k)} - 1 \right) < 0$ and obtain the condition
\begin{equation*}
\alpha_d \beta \bar{q}^{2} < \frac{-2(\mu + D_k)^2 + C(\mu + D_k)}{\left(e^{2 T(\mu + D_k)} - 1 \right)}.
\end{equation*}
This inequality holds for any $\alpha_d < 0$. 

Now we consider the second case when $\mu + D_k > 0$. In order to have under this condition the inequality $\left(e^{2 T(\mu + D_k)} - 1 \right) > 0$, 
we obtain from \eqref{stab_cond} an explicit inequality constraint for $\alpha_d$ for different $k$ 
\begin{equation}\label{stab_cond_delta_less_0_fin_new}
\alpha_{d,k} < \frac{-2(\mu + D_k)^2 + C(\mu + D_k)}{\beta \bar{q}^{2} \left(e^{2 T(\mu + D_k)} - 1 \right)}.
\end{equation}
Hence, for each $k=1,\ldots,\infty$ we can find appropriate $\alpha_{d,k}$, under conditions \eqref{stab_cond_delta_less_0_fin_new}, for which asymptotic stability holds. 

Since there are only finitely many $(\mu + D_k) > 0$, we can always find an $\alpha_d$ which implies \eqref{stab_cond} for all $k=1,\ldots,\infty$ and for which $\|y(t)\|_{H}$ is decreasing. 
For this we can for example choose
\begin{equation*}
\begin{aligned}
\bar{\alpha}_d = \min\{\alpha_{d,k} : (\mu + D_k) > 0,~k \in \mathbb{N}\}. 
\end{aligned}
\end{equation*} 
This completes the proof.
\end{proof}

Theorem~\ref{thm:stabclosedloopfirstorder} together with \cite[Theorem 5.1.1]{Henry}, in which we have for our case $X = L^2(0, L)$ and $X^{\alpha} = X^{\frac{1}{2}} = H^1_0 (0, L)$, gives us that the closed-loop system obtained after implementation of SAC is locally uniformly asymptotically stable at equilibrium in $X^{\alpha} = H^1_0 (0, L)$.

Together with the results of Subsection~\ref{subsec:qcost}, this yields that continuously applied SAC with any $T,\bar{q}>0$ and a sufficiently small $\alpha_d<0$ stabilizes \eqref{eq:1dproblem} in $y_d=0$ if $\|y_0\|$ is sufficiently small. Our numerical experiments indeed reveal that this result does not extend to global asymptotic stability, i.e., in general, one cannot find a fixed $\alpha_d$ for which stability of the closed-loop is guaranteed for any $y_0 \in Y$. Moreover, too small $\alpha_d$ result in very large control actions, so that a sufficiently short time stepping is needed for a numerical realization in order to avoid overshooting. This suggests choosing $\alpha_d$ depending on $y_0$, for example by setting $\alpha_d = \gamma J_1(u_1)$ with some $\gamma<0$.

If we take $\mu = \left(\frac{\pi}{L}\right)^2$ or smaller, then any negative constant $\gamma$ or even small enough (by absolute value) negative $\alpha_d$ would lead to stabilization of the state. This indicates that SAC actions also qualify for rapid stabilization. The corresponding analysis on stabilization rates may be considered in future work.

In Subsection~\ref{subsec:numerical} we provide a parameter study for the choice of the constants $\gamma$ and $T$.
\section{Discretization and numerical results}\label{sec:numerical}
In this section we will provide some theoretical results concerning Galerkin approximations of SAC actions for linear PDEs of parabolic type and nonlinear costs along with the numerical results for the unstable heat equation under certain conditions. In particular, we investigate numerically the stabilization properties on SAC for control on a subdomain and the case of partial observations. Moreover, we see how disturbance in the instability constant will affect the results, and make a comparison of SAC with standard LQR method.  
\subsection{Galerkin approximations}\label{subsec:galerkin}
In order to obtain a finite-dimensional controller, we consider here Galerkin approximations. On the level of the discretizations, we can then compare the proposed late-lumping control actions 
with those of \cite{AnsariMurphey2016}  applied to a finite-dimensional approximation of the PDE. 
For this subsection, we identify the Hilbert spaces $H^{*}$ and $U^{*}$ with $H$ and $U$, respectively (as described in the Remark~\ref{rem:remark}). Furthermore, we will work under the assumptions that $A$ is induced by a bilinear form in the following settings: 
\begin{enumerate}
	\item $V \hookrightarrow H \simeq H^{*} \hookrightarrow V^{*}$ is a Gelfand triple, $V$ separable Hilbert space. 
	\item $a(\cdot, \cdot): V \times V \rightarrow \mathbb{R}$ is a bilinear form and there are $\alpha, \beta > 0$ and $\gamma \ge 0$ with 
	\begin{equation*}
			\begin{aligned}
				& |a(v,w)| \le \alpha \|v\|_V \|w\|_V \quad \forall v, w \in V, \\
				& a(v,v) + \gamma \|v\|_H^2 \ge \beta \|v\|_V^2 \quad \forall v \in V. 
			\end{aligned}
	\end{equation*}
\end{enumerate}
	 
Under these settings and the Assumption~\ref{ass:main}, the corresponding operator $A\colon D(A) \to H$, densely defined by 
$$\la Ay, k \ra_{V^{*},\; V} = a(y,k) \quad \forall y \in D(A) := \{u \in V; v \mapsto a(u,v) \text{ is continuous w.r.t. } \|\cdot \|_H\},~ k \in V,$$ generates a $C_0$-semigroup (see \cite[Theorem~3, p.330]{Dautray_Lions}).

The mild solution $y \in C(0,T;H)$ of \eqref{eq:forwardproblemlinear} and the mild solution $\pi \in C(0,T;H)$ of \eqref{eq:Adjoint_Riesz} then coincides with the weak solutions given by $\tilde{y} \in W(0,T;H, V)$ and $\tilde{p} \in W(0,T;H, V)$, respectively, for almost every $t \in [0,T]$
\begin{equation}\label{eq:mild_forward}
\la \tilde{y}_t(t), k \ra_{V^*, \; V} = \la A\tilde{y}, k \ra_{V^*,\; V} +\la Bu(t), k\ra_{V^*, \; V},\quad \tilde{y}(0)=\tilde{y}_0,\quad \text{for all}~k\in V,
\end{equation}
\begin{equation}\label{eq:mild_adjoint}
\la \tilde{p}_t(t), k \ra_{V^*, \; V} = -\la A^{\star}\tilde{p}, k \ra_{V^*,\; V} -\la (l_1)_{\tilde{y}}(\tilde{y}(t)), k\ra_{V^*, \; V},\quad \tilde{p}(T) = m_{\tilde{y}}(\tilde{y}
(T)),\quad \text{for all}~k\in V,
\end{equation}
see \cite{Ball1977}, \cite{Bensoussan2007}.

Using Remark~\ref{rem:remark}, we also have for almost every $t \in [0,T]$
\begin{equation}\label{eq:riesz_conn}
	\langle p(t), v\rangle_{H^{*},\; H} = \langle \tilde{p}(t), v\rangle_H, \quad \text{for all}~v \in H,
\end{equation}
with $p\in C(0,T;H^*)$ being the mild solution of \eqref{eq:adjointproblemlinear}.

Let $V_h^1 \subset V$ be a finite-dimensional subspace of $V$ with a basis $(\phi_i)_{i=1}^N$, $V^2_h \subset V$ be a finite-dimensional subspace of $V$ with a basis $(\kappa_i)_{i=1}^K$ and $U_h \subset U$ be a finite-dimensional subspace of $U$ with basis $(\psi_i)_{i=1}^M$. With the ansatz
\begin{equation}\label{eq:discrete-Ansatz}
\tilde{y}(t)=\sum_{i=1}^N \hat{y}_i(t)\phi_i,\quad \tilde{p}(t)=\sum_{i=1}^K \hat{p}_i(t)\kappa_i,\quad u(t)=\sum_{i=1}^M \hat{u}_i(t)\psi_i,
\end{equation}
we get an approximation of \eqref{eq:mild_forward} by
\begin{equation}\label{eq:forward_discrete}
  \mathcal{M} \dot{\hat{y}}(t)=\mathcal{A} \hat{y}(t)+\mathcal{B} \hat{u}(t),
  \quad \mathcal{M} \hat{y}(0)=(\la y_0,\phi_1\ra_H,\ldots,\la y_0,\phi_N\ra_H)^\top
\end{equation}
with matrices 
\begin{equation*}
\mathcal{M}=(\la\phi_j,\phi_i\ra_H)_{i,j=1,\ldots,N},~\mathcal{A}=( a(\phi_j,\phi_i))_{i,j=1,\ldots,N},~\text{and}~\mathcal{B}=(\la B\psi_j,\phi_i\ra_H)_{i=1,\ldots,N,~j=1,\ldots,M}, 
\end{equation*}
see, e.g., \cite{Hinze_Pinnau}. 

Similar, with $\hat{l}_1\colon \RR^N \to \RR$ and $\hat{m}\colon \RR^N \to \RR$ defined as
\begin{equation*}
\hat{l}_1(\hat{y}) \mapsto l_1\left(\sum_{k=1}^N \hat{y}_k \phi_k\right),
\quad \hat{m}(\hat{y}) \mapsto m\left(\sum_{k=1}^N \hat{y}_k \phi_k\right)
\end{equation*}
we get an approximation of \eqref{eq:stageproblem} by
\begin{equation}\label{eq:J_1_discrete}
 \hat{J}_1(\hat{y})=\int_0^T \hat{l}_1(\hat{y}(t))\,dt+\hat{m}(\hat{y}(T)),
\end{equation}
and an approximation of \eqref{eq:mild_adjoint} by
\begin{equation}\label{eq:adjoint_discrete}
\tilde{M} \dot{\hat{p}}(t) = -\tilde{A} \hat{p}(t)-\tilde{l}_1(\hat{y}(t)), \quad \tilde{M} \hat{p}(T)=\tilde{m}(\hat{y}(T))
\end{equation}
with $\tilde{M}=(\la\kappa_j,\phi_i\ra_H)_{i=1\ldots,N,~j=1,\ldots,K}$, $\tilde{A}=( a(\phi_j, \kappa_i))_{j=1,\ldots,N,~ i=1,\ldots,K}$,
and
\begin{equation*}
\tilde{l}_1(\hat{y}(t))=(\la (\hat{l}_1)_{\hat{y}}(\hat{y}(t)),\phi_j\ra_H)_{j=1,\ldots,N},\quad \tilde{m}(\hat{y}(t))=(\la \hat{m}_{\hat{y}}(\hat{y}(T)),\phi_j\ra_H)_{j=1,\ldots,N}. 
\end{equation*}
This yields the following late-lumping SAC control action.

\begin{proposition}\label{prop:optimizediscretize} For a discretized reference control $\hat{u}_1$ such that $u_1(\tau)=\sum_{i=1}^M \hat{u}_{1,i}(\tau) \psi_i$, $\hat{p}$ being a solution of \eqref{eq:adjoint_discrete},  
$\tilde{\Lambda}(\tau)=\tilde{B} \hat{p}(\tau)\hat{p}^\top (\tau) \tilde{B}^\top$ with 
$\tilde{B}=(\la \kappa_j,B\psi_i\ra_H)_{i=1,\ldots,M,~j=1,\ldots,K}$, 
and $\tilde{R} = (\la \psi_i, R\psi_j \ra_U)_{i,j=1,\ldots,M}$ an approximation of \eqref{eq:soll2problemlinear} is given by 
\begin{equation}\label{eq:soll2problem_discrete}
\hat{u}_{\text{opt}}(\tau)=\left(\tilde{\Lambda}+\tilde{R}^\top\right)^{-1}\left[ \tilde{\Lambda}\hat{u}_1(\tau)+\alpha_d\tilde{B} \hat{p}(\tau)\right],
\end{equation}
for all $\tau \in [0,T]$.
\end{proposition}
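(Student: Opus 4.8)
The plan is to obtain \eqref{eq:soll2problem_discrete} by projecting the continuous optimality condition onto the finite-dimensional trial and test spaces rather than re-running the whole variational argument. Since the discrete stage cost is the same quadratic functional $l_2$ that appears in Theorem~\ref{thm:main}, the discrete action is characterized by the Galerkin version of the pointwise first-order condition \eqref{eq:firstvarpointwise} (equivalently the operator equation \eqref{eq:operatoreq}), now tested only against elements of $U_h$. Concretely, I would insert the ansatz \eqref{eq:discrete-Ansatz} for $y,p,u$ and impose
\[
\bigl[\la p(\tau),B(u^*(\tau)-u_1(\tau))\ra_H-\alpha_d\bigr]\la p(\tau),B\xi\ra_H+\la Ru^*(\tau),\xi\ra_U=0
\]
for every basis function $\xi=\psi_l$, $l=1,\ldots,M$, using $R^*=R$ from Assumption~\ref{ass:main}.

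The core of the argument is then purely the identification of operators with their matrix representations. First I would record that, for $w=\sum_j\hat w_j\psi_j$, the functional $P(\tau)$ from \eqref{eq:Pdeflinear} acts as $P(\tau)w=\la p(\tau),Bw\ra_H=\hat p(\tau)^\top\tilde B\hat w$, where $\tilde B=(\la\kappa_i,B\psi_j\ra_H)$ couples the adjoint test basis $(\kappa_i)$ with the control basis $(\psi_j)$ through $B$; in particular $\la p(\tau),B\psi_l\ra_H=(\tilde B^\top\hat p(\tau))_l$. Consequently the bilinear form associated with $P^*(\tau)P(\tau)$ discretizes to the rank-one matrix $\tilde\Lambda=\tilde B^\top\hat p(\tau)\hat p^\top(\tau)\tilde B$, the term $P^*(\tau)\alpha_d$ becomes $\alpha_d\tilde B^\top\hat p(\tau)$, and the regularization term yields the matrix $R=(\la\psi_l,R\psi_j\ra_U)$, which is symmetric by self-adjointness of the operator $R$.

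Substituting these into the tested condition and abbreviating $b:=\tilde B^\top\hat p(\tau)$ gives, for all $l$, the vector identity $\bigl(b^\top\hat u^*-b^\top\hat u_1-\alpha_d\bigr)b+R\hat u^*=0$; collecting the terms in $\hat u^*$ and using $bb^\top=\tilde\Lambda$ produces $(\tilde\Lambda+R^\top)\hat u^*=\tilde\Lambda\hat u_1+\alpha_d\tilde B^\top\hat p(\tau)$, which is \eqref{eq:soll2problem_discrete} after inversion. Invertibility of $\tilde\Lambda+R^\top$ is inherited from the continuous case exactly as in Theorem~\ref{thm:main}: $\tilde\Lambda=bb^\top$ is positive semidefinite, and the coercivity estimate $\la Ru,u\ra_U\ge\gamma_R\|u\|_U^2$ from Assumption~\ref{ass:main} makes $R^\top$ positive definite on $U_h$, so the sum is positive definite and hence invertible.

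The only genuinely delicate point is the bookkeeping: one must keep the forward basis $(\phi_i)$, the adjoint test basis $(\kappa_i)$, and the control basis $(\psi_j)$ strictly distinct — this is precisely the independent discretization of state and adjoint that the late-lumping approach permits — and track where the various Gram/mass matrices and transposes enter so that $\tilde\Lambda$ and $\tilde B$ emerge exactly as stated. Once the pairing $\la\kappa_i,B\psi_j\ra_H$ is correctly identified as $\tilde B$, the remaining manipulation reduces to the same linear-algebra rearrangement already carried out in \eqref{eq:firstvarpointwiseP}--\eqref{eq:operatoreq}, so no new analytic difficulty arises beyond verifying that the Galerkin projection does not introduce spurious coupling terms.
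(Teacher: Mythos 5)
Your proof is correct, and the core computation is the one the paper's (one-sentence) proof relies on: identify $P(\tau)w=\hat p(\tau)^\top\tilde B\hat w$, hence $P^*(\tau)P(\tau)\mapsto\tilde\Lambda=\tilde B^\top\hat p(\tau)\hat p^\top(\tau)\tilde B$ and $P^*(\tau)\alpha_d\mapsto\alpha_d\tilde B^\top\hat p(\tau)$, and invert $\tilde\Lambda+R^\top$ as the sum of a positive semidefinite rank-one matrix and a positive definite matrix. The route differs in one respect: the paper substitutes the ansatz \eqref{eq:discrete-Ansatz} directly into the closed-form solution \eqref{eq:soll2problemlinear}, whereas you Galerkin-project the first-order optimality condition \eqref{eq:firstvarpointwise}/\eqref{eq:operatoreq} onto $U_h$ (testing against each $\psi_l$) and then solve the resulting $M\times M$ system. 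Your variant buys a little extra precision: the continuous formula applied to the discrete data need not produce an element of $U_h$ (the Riesz representative $B^*p(\tau)$ generally lies outside the control subspace), so \enquote{plugging in} only yields an approximation, while your projected optimality system characterizes $\tilde u_2^*(\tau)$ as the exact minimizer of $l_2$ over $U_h$ with $p$ replaced by the solution of \eqref{eq:adjoint_discrete} --- which makes precise the sense in which \eqref{eq:soll2problem_discrete} \enquote{approximates} \eqref{eq:soll2problemlinear}. Two bookkeeping points you handled correctly and that are worth recording: the paper's $\tilde B=(\la \kappa_i,B\phi_j\ra_H)_{i=1,\ldots,K,\,j=1,\ldots,M}$ must be read with $B\psi_j$ in place of $B\phi_j$ (the index range $j\leq M$ shows this is a typo for the control basis), and your adjoint is the solution $\hat p$ of the Galerkin adjoint \eqref{eq:adjoint_discrete} with the test basis $(\kappa_i)$, so what you derive is genuinely the late-lumping action and not the early-lumping $\bar u^*$ of \eqref{eq:soll2problem_discrete2}, whose agreement with \eqref{eq:soll2problem_discrete} holds only under the hypotheses of Proposition~\ref{prop:galerkin}.
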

\begin{proof}
Under the given spaces identifications, Remark~\ref{rem:remark}, and \eqref{eq:riesz_conn} we can rewrite \eqref{eq:soll2problemlinear} in the following form
\begin{equation}\label{eq:u_appr}
 u_{\text{opt}}(\tau) = \Pi(\tau)[P^{{\star}}(\tau)P(\tau)u_1(\tau)+P^{{\star}}(\tau)\alpha_d]
\end{equation}
for all $\tau \in [0,T]$, with $P \in C([0,T];U^*)$ defined by
\begin{equation}\label{eq:P_R}
 P(\tau)\: w \mapsto \la \tilde{p}(\tau),Bw\ra_{H},\quad \tau \in [0,T],
\end{equation}
and with 
\begin{equation}\label{eq:Pi_R}
\Pi(\tau) = (P^{{\star}}(\tau)P(\tau)+R^{{\star}})^{-1}, \quad \tau \in [0,T].
\end{equation}
Plugging \eqref{eq:discrete-Ansatz} into \eqref{eq:P_R}, we get

\begin{equation}
 P(\tau) \: \sum_{i=1}^M \hat{u}_{i}(\tau)\psi_i \mapsto \la \tilde{p}(t),B \sum_{i=1}^M \hat{u}_{i}(\tau)\psi_i\ra_{H}, \quad \tau \in [0,T]
\end{equation}

\begin{equation}
\la \sum_{i=1}^K \hat{p}_{i}(\tau)\kappa_i,B \sum_{i=1}^M \hat{u}_{i}(\tau)\psi_i\ra_{H} = \sum_{i=1}^K \sum_{j=1}^M \hat{p}_{i}(\tau) \hat{u}_{j}(\tau) \la \kappa_i,B \psi_j\ra_{H} = \hat{p}^\top (\tau) \tilde{B}^\top\hat{u}(\tau),
\end{equation}
thus, we define the following approximation operator
\begin{equation}\label{eq:P_appr}
 P_h(\tau)\: \hat{u}(\tau) \mapsto \hat{p}^\top (\tau) \tilde{B}^\top\hat{u}(\tau), \quad \tau \in [0,T].
\end{equation}
Applying \eqref{eq:discrete-Ansatz} to the variational form of the implementation of operator $R$, we get $\tilde{R}$
\begin{equation*}
\begin{aligned}
& w(\tau) = Ru(\tau) \Leftrightarrow \la w(\tau), v \ra_U = \la Ru(\tau), v \ra_U, \quad \forall v \in U,\quad\tau \in [0,T],\\
& \la w(\tau), \psi_j \ra_U = \sum_{i=1}^M \hat{u}_{i}(\tau) \la R \psi_i, \psi_j \ra_U, \quad \forall j=1,...,M,\quad \tau \in [0,T],\\
& \la w(\tau), \psi_j \ra_U = (\tilde{R}_{j1}, ..., \tilde{R}_{jM})\hat{u}(\tau) , \quad \forall j=1,...,M,\quad \tau \in [0,T].
\end{aligned}
\end{equation*}
Plugging \eqref{eq:P_appr} and $\tilde{R}$ into \eqref{eq:Pi_R} we get
\begin{equation}\label{eq:Pi_appr}
\Pi_h(\tau) = (\tilde{B} \hat{p}(\tau)\hat{p}^\top (\tau) \tilde{B}^\top + \tilde{R}^{\top})^{-1}, \quad \tau \in [0,T].
\end{equation}  
Now plugging \eqref{eq:P_appr} and \eqref{eq:Pi_appr} into \eqref{eq:u_appr} and using that $(\tilde{\Lambda}+\tilde{R}^\top)$
is invertible as $\tilde{\Lambda}$ being symmetric positive semidefinite and $\tilde{R}$ being symmetric positive definite, the result follows.
\end{proof}

The alternative early-lumping approach is to apply the SAC principle from \cite{AnsariMurphey2016} directly to the discretized problem \eqref{eq:J_1_discrete} subject to \eqref{eq:forward_discrete}. With $\tilde{R}$ as in Proposition~\ref{prop:optimizediscretize} a SAC action $\bar{u}_{\text{opt}}(\tau)$ is then chosen as 
\begin{equation*}
 \bar{u}_{\text{opt}}(\tau) := \mbox{argmin}_{\bar{u} \in U}~\hat{l}_2(\bar{u};\tau):=\frac12\left[ \frac{d\hat{J}_1}{d\lambda^+}(\tau,\bar{u})-\alpha_d \right]^2+\frac12\la \bar{u},\tilde{R} \bar{u}\ra_{\RR^M}.
\end{equation*}
For the discretized reference control $\hat{u}_1$ as in Proposition~\ref{prop:optimizediscretize}, we then get 
\begin{equation}\label{eq:soll2problem_discrete2}
 \bar{u}_{\text{opt}}(\tau)=\left(\Lambda(t)+\tilde{R}^\top\right)^{-1}\left[ \Lambda(t)\hat{u}_1(\tau)+\alpha_d \mathcal{B}^\top ((\mathcal{M}^{-1} )^{\top} \hat{\rho}(\tau))\right],
\end{equation}
where $\Lambda(\tau)=\mathcal{B}^\top((\mathcal{M}^{-1})^\top\hat{\rho}(\tau))\hat{\rho}(\tau)^\top \mathcal{M}^{-1} \mathcal{B}$, $\hat{\rho}$ solves the backward ODE
\begin{equation}\label{eq:adjoint_discrete2}
\dot{\hat{\rho}}(t)=-\mathcal{A}^{\top}(\mathcal{M}^{-1})^{\top}\hat{\rho}(t)-(\hat{l}_1)_{\hat{y}}(\hat{y}(t)),\quad \hat{\rho}(T)=\hat{m}_{\hat{y}}(\hat{y}(T))
\end{equation}
and $\hat{y}$ is the solution of \eqref{eq:forward_discrete}. Only under certain assumptions it holds that the two approaches yield the same control action.

\begin{proposition}\label{prop:galerkin}
Choosing $V_h^1=V^2_h$ with the same basis, i.e., $K=N$ and $\phi_i=\kappa_i$ for $i=1,\ldots,N$, we have
\begin{equation*}
\hat{p}(t)=(\mathcal{M}^{-1})^{\top}\hat{\rho}(t),\quad t \in [0,T].
\end{equation*}
In particular, $\hat{u}_{\text{opt}}$ obtained from \eqref{eq:soll2problem_discrete} and $\bar{u}_{\text{opt}}$ obtained from \eqref{eq:soll2problem_discrete2} coincide.
\end{proposition}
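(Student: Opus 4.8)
The plan is to show, under the hypothesis $\kappa_i = \phi_i$ (so $K=N$), that all matrices in the two adjoint systems and the two control formulas correspond to one another via transposition and the mass matrix $\mathcal{M}$, and then to propagate this correspondence first through the (linear) adjoint dynamics and then through the (explicit) control formulas. Throughout I would use that both adjoint systems are driven by the \emph{same} state trajectory $\hat{y}$, since \eqref{eq:forward_discrete} is the common forward discretization.

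First I would compare the system matrices entrywise. With $\kappa_i=\phi_i$ the Gram matrix $\tilde{M}=(\langle \kappa_i,\phi_j\rangle_H)$ equals the symmetric $\mathcal{M}=(\langle \phi_i,\phi_j\rangle_H)$, and $\tilde{A}=(\langle \kappa_i,A\phi_j\rangle_H)=(\langle A\phi_j,\phi_i\rangle_H)=\mathcal{A}^\top$, hence $\tilde{A}^\top=\mathcal{A}$; likewise $\tilde{B}=(\langle \phi_i,B\psi_j\rangle_H)=\mathcal{B}^\top$. The one genuinely conceptual point is to identify the Galerkin right-hand side of \eqref{eq:adjoint_discrete} with the gradient appearing in \eqref{eq:adjoint_discrete2}: applying the chain rule to $\hat{l}_1(\hat{y})=l_1(\sum_k \hat{y}_k\phi_k)$ gives $((\hat{l}_1)_{\hat{y}}(\hat{y}))_j=\langle (l_1)_y(y),\phi_j\rangle_{H^*,H}$, which is exactly the $j$-th component of $\tilde{l}_1(\hat{y})$, and similarly $\hat{m}_{\hat{y}}=\tilde{m}$. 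This is precisely where the choice of identical trial and test bases is used.

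Next I would verify the claimed relation $\hat{p}=\mathcal{M}^{-1}\hat{\rho}$ by substitution. Setting $\hat{\rho}=\mathcal{M}\hat{p}$ in \eqref{eq:adjoint_discrete2} and using $\tilde{A}^\top=\mathcal{A}$ produces $\mathcal{M}\dot{\hat{p}}=-\mathcal{A}\hat{p}-(\hat{l}_1)_{\hat{y}}(\hat{y})$ with terminal datum $\mathcal{M}\hat{p}(T)=\hat{m}_{\hat{y}}(\hat{y}(T))$, which is exactly the Galerkin adjoint \eqref{eq:adjoint_discrete} after rewriting $\tilde{M},\tilde{A},\tilde{l}_1,\tilde{m}$ as above. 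Since both are linear terminal-value ODEs with identical coefficients and data, uniqueness forces $\hat{\rho}(t)=\mathcal{M}\hat{p}(t)$, that is $\hat{p}(t)=\mathcal{M}^{-1}\hat{\rho}(t)$ for all $t\in[0,T]$.

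Finally, I would feed $\mathcal{M}^{-1}\hat{\rho}=\hat{p}$ and the transpose identities into \eqref{eq:soll2problem_discrete2}. The source term becomes $\mathcal{B}\mathcal{M}^{-1}\hat{\rho}=\mathcal{B}\hat{p}=\tilde{B}^\top\hat{p}$, and the matrix $\Lambda=\mathcal{B}(\mathcal{M}^{-1}\hat{\rho})(\mathcal{M}^{-1}\hat{\rho})^\top\mathcal{B}^\top=\tilde{B}^\top\hat{p}\hat{p}^\top\tilde{B}=\tilde{\Lambda}$; since $R$, $\alpha_d$ and $\hat{u}_1$ are common to both, substitution reproduces \eqref{eq:soll2problem_discrete} verbatim, so $\bar{u}^*=\hat{u}^*$. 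I expect the only real obstacle to be the careful bookkeeping of transposes together with the chain-rule identification of the two discrete source vectors; once these are settled, both assertions follow by direct substitution and ODE uniqueness.
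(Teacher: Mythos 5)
Your proof is correct and follows essentially the same route as the paper's (much terser) argument: the paper likewise notes $\mathcal{A}=\tilde{A}^\top$, $\mathcal{B}=\tilde{B}^\top$, $\mathcal{M}=\tilde{M}^\top$ under $\phi_i=\kappa_i$ and then declares that the claim follows from \eqref{eq:adjoint_discrete}, \eqref{eq:adjoint_discrete2}, \eqref{eq:soll2problem_discrete} and \eqref{eq:soll2problem_discrete2}. Your write-up simply makes explicit the steps the paper leaves implicit --- the chain-rule identification $\tilde{l}_1(\hat{y})=(\hat{l}_1)_{\hat{y}}(\hat{y})$, $\tilde{m}=\hat{m}_{\hat{y}}$, the ODE-uniqueness argument giving $\hat{p}=\mathcal{M}^{-1}\hat{\rho}$, and the substitution $\Lambda=\tilde{\Lambda}$ --- so it is a faithful, fleshed-out version of the same proof.
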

\begin{proof}
Choosing $V_h^1=V^2_h$ with the same basis functions, we have $\mathcal{A}^\top=\tilde{A}$, $\mathcal{B}^\top=\tilde{B}$ and $\mathcal{M}^\top=\tilde{M}$. With that, the result follows from \eqref{eq:adjoint_discrete} and \eqref{eq:adjoint_discrete2} as well as \eqref{eq:soll2problem_discrete}
and \eqref{eq:soll2problem_discrete2}.
\end{proof}

\begin{remark}\label{rem:remark2}
The motivation to use different Ansatz functions for the state and the adjoint state comes from the reason that the adjoint variable $p$ can admit more regularity than the state $y$. Thus, it is meaningful to keep different Ansatz functions. A detailed discussion can be found in (\cite{Hinze_Pinnau}, Section 3.2).
\end{remark}

We note that error analysis can be applied to \eqref{eq:soll2problem_discrete} in order to estimate the error of the finite-dimensional SAC control action in terms of data for the stage problem~\eqref{eq:stageproblem}. For appropriate techniques concerning parabolic problems, see, e.g., \cite{MeidnerVexler2008}.
\subsection{Numerical results}\label{subsec:numerical}
For our numerical study of SAC for the problem \eqref{eq:1dproblem} we choose the parameters provided in Table~\ref{tab:parameter}. Our numerical implementation uses the Galerkin approximation presented in Subsection~\ref{subsec:galerkin}. Furthermore, we use a Gelfand triple with $H = L^2(\Omega), V = H^1_0(\Omega)$, and $V^* = H^{-1}(\Omega)$. We work under the hypothesis of Proposition~\ref{prop:galerkin} and choose piecewise linear functions for the finite-dimensional state and adjoint subspaces $V_h = V^{*}_h$. For the finite-dimensional control subspace $U_h$ we take piecewise constant functions on an equidistant grid with mesh size $h=0.01$. The resulting ODEs are solved numerically using the implicit Euler method in time at the sampling times $t_s$. Unlike in the original SAC algorithm  in \cite{AnsariMurphey2016} and the generalization in Section~\ref{sec:framework}, but in order to make the numerical results comparable to the theoretical results in the Subsection~\ref{subsec:heateq}, we consider the calculation time $t_{calc} = 0$ and a fixed control application time $\bar{\lambda} = t_s$. However, we note that our numerical experiments reveal that small $t_{calc} > 0$ time stepping for $\bar{\lambda}$ using line search does not change the results qualitatively.
\begin{table}[h]
\begin{tabular}{lll}
\textbf{Parameter} & \textbf{Meaning} & \textbf{Value}\\
\hline \hline
 $L$ & length of the spatial 1-D area & $1$\\
 $\mu$ & instability constant & $1.35 \pi^2$\\
 $y_0(x)$ & initial value for the temperature profile & $\frac{1}{5}\sin\left({\pi}x\right)$\\
 $\bar{q}$ & weight constant for matrix $Q$ & $10$\\
 $\beta$ & constant & $1.6$\\
 $t_s$ & sampling time step & $0.1$\\
\end{tabular}
\caption{Problem parameters chosen for the numerical study concerning the reaction-diffusion problem \eqref{eq:1dproblem} and the chosen stage costs \eqref{eq:1dcost}}
\label{tab:parameter}
\end{table}
\subsubsection{Full domain control.}
We recall that the solution of \eqref{eq:1dproblem} without control, i.e., $u(t)=u_1(t)=0$, is exponentially unstable. Our numerical stabilization results for the full domain control are reported in Figure~\ref{fig:different_gamma_and_T}. The Subfigures~(A) and ~(B) illustrate the performance of the finite-dimensional SAC controller for driving the state towards the unstable equilibrium $y_d=0$ with the choice $T=1$ and $\gamma= -0.5$. In Subfigure~(C), we see that smaller $\gamma$ lead to faster stabilization. However, due to our fixed implicit time stepping, the rate is limited by overshooting which becomes visible in our example for $\gamma=-1$ in $t=0.5$. Subfigure~(D) shows that a longer time horizon $T$ leads to a smaller error of the state in the $L_2$-norm. We can observe that the stabilization rate is actually exponential until the error drops below a small constant that depends on the chosen sampling time.
\begin{figure}[!t]
\captionsetup{width=0.8\linewidth}
  \begin{subfigure}[b]{0.49\linewidth}
    \includegraphics[width=\linewidth]{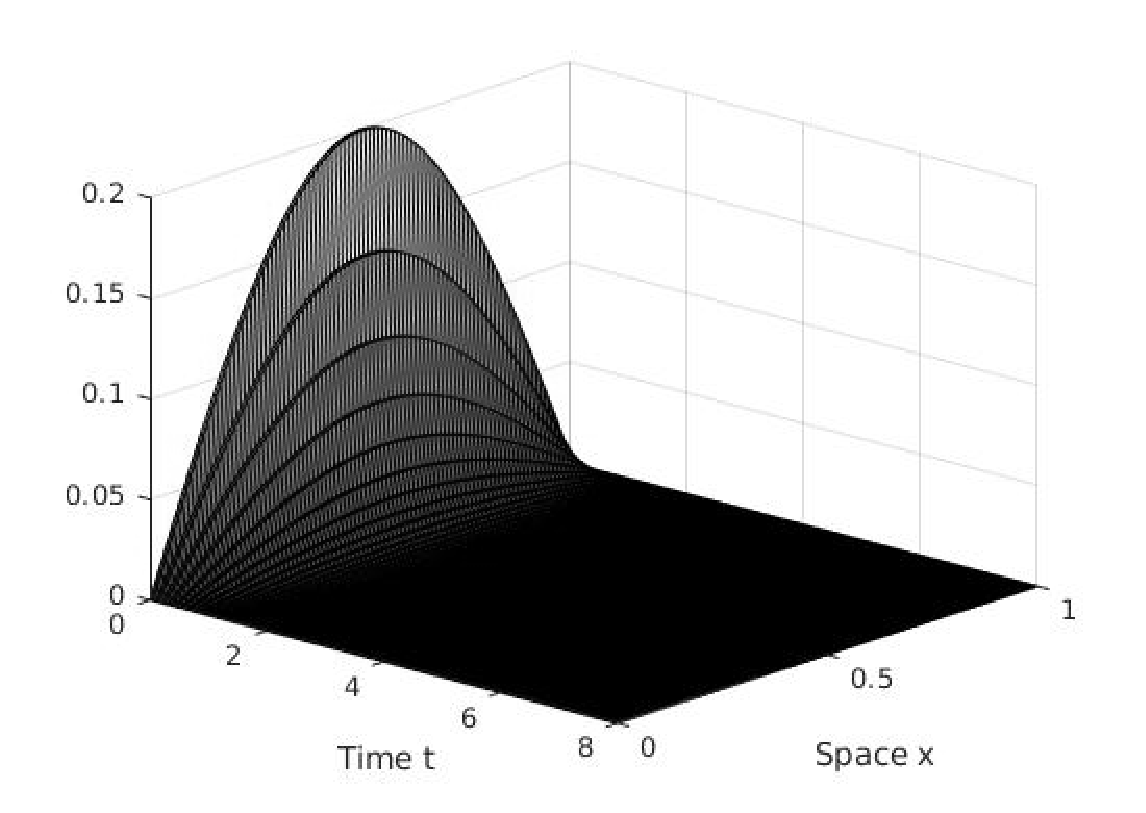}
    \caption{}
  \end{subfigure}
  \begin{subfigure}[b]{0.49\linewidth}
    \includegraphics[width=\linewidth]{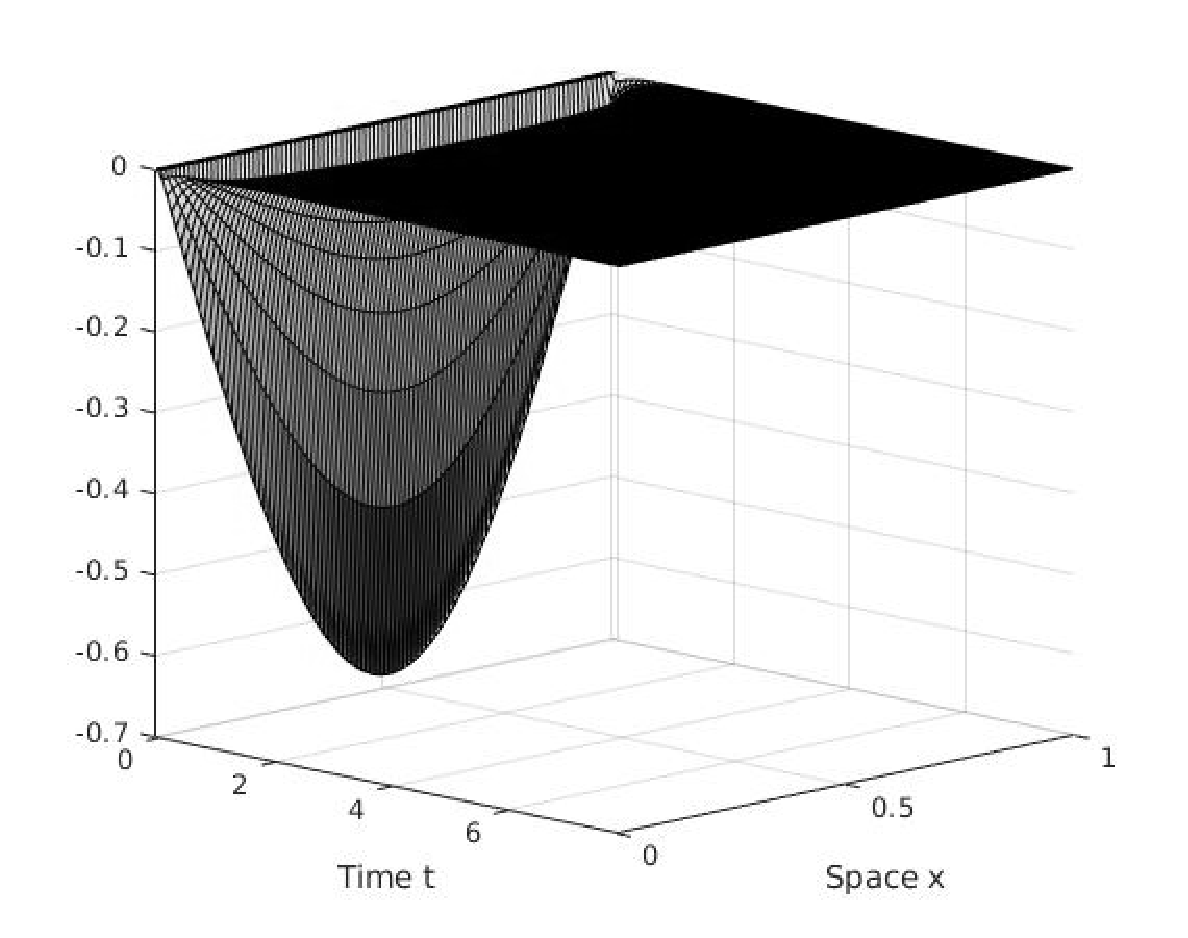}
    \caption{} 
  \end{subfigure}\\
  \begin{subfigure}[b]{0.49\linewidth}
    \includegraphics[width=\linewidth]{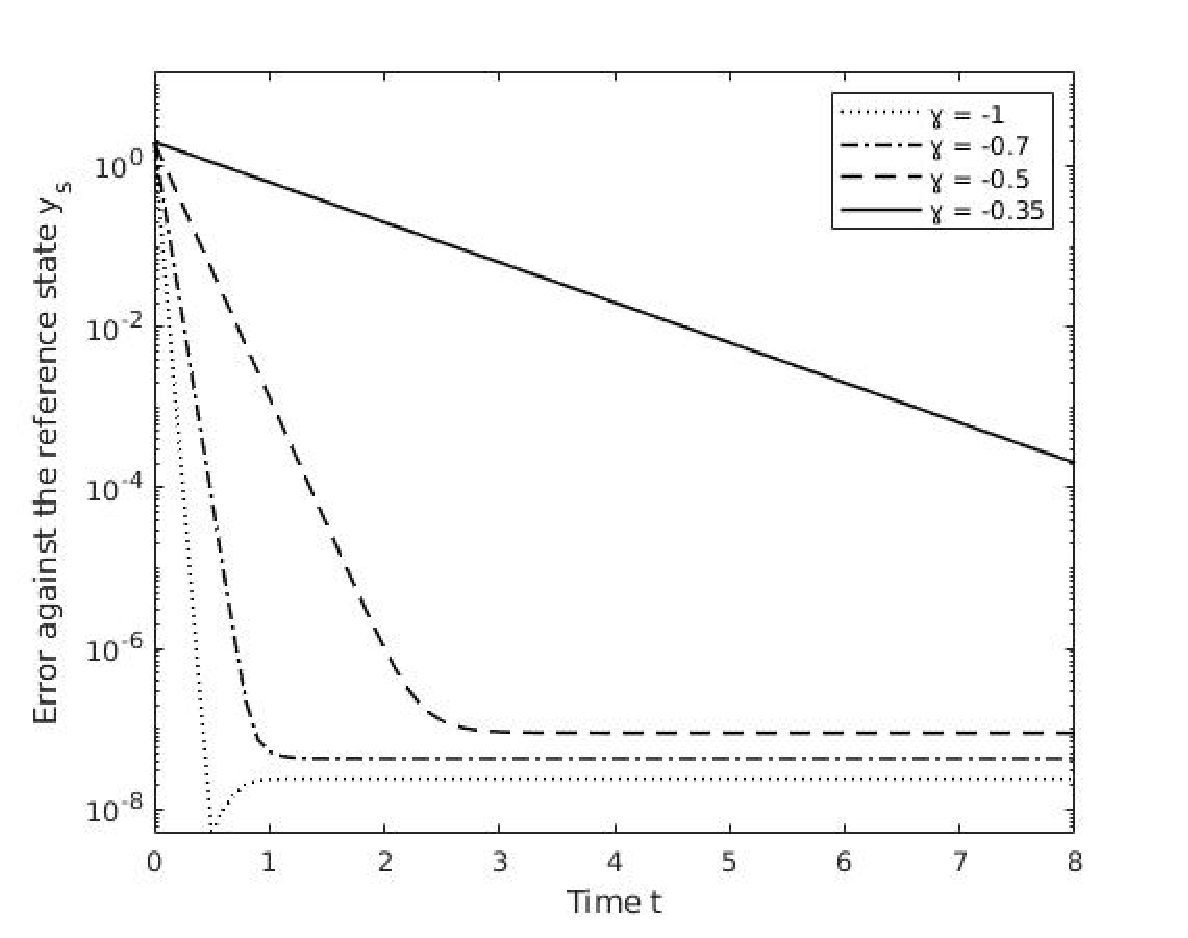} 
    \caption{}
  \end{subfigure}
  \begin{subfigure}[b]{0.49\linewidth}
    \includegraphics[width=\linewidth]{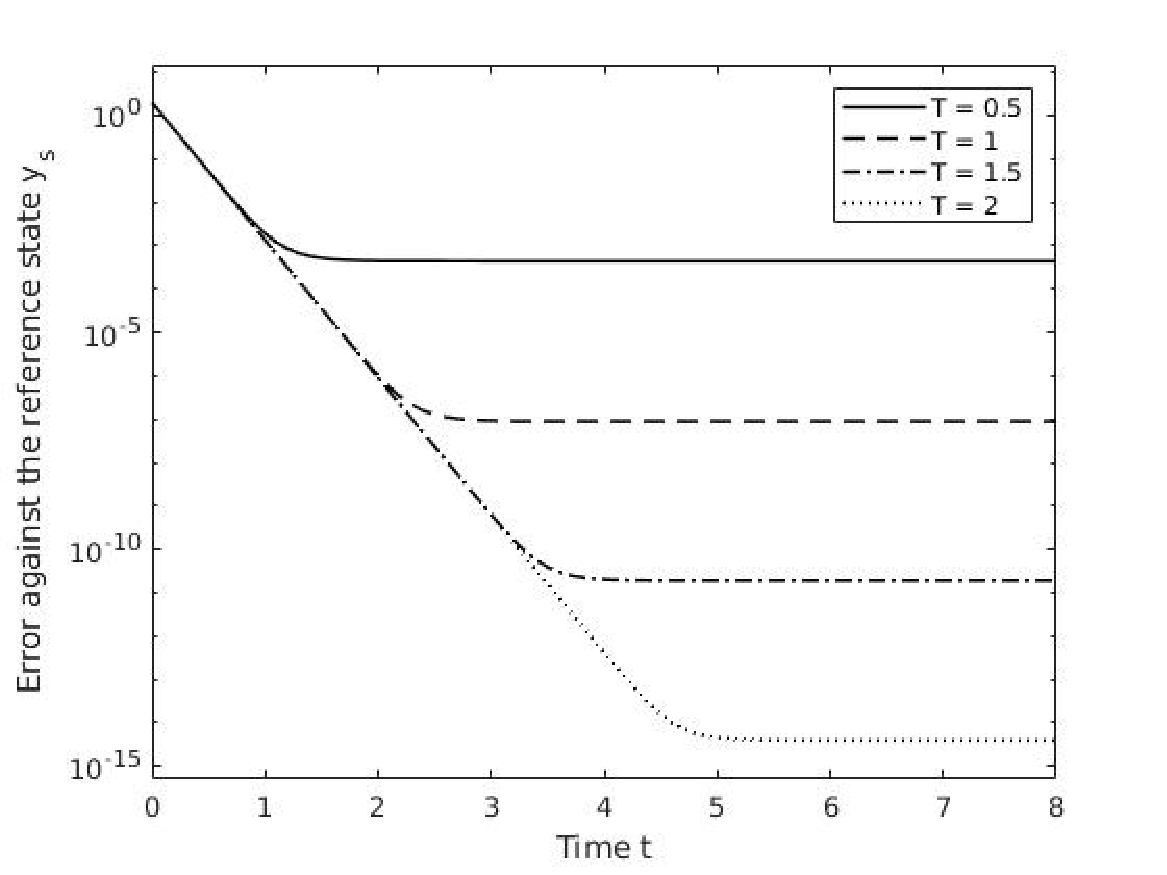}
    \caption{}
  \end{subfigure}
  \caption[0.2\linewidth]{Numerical results for the unstable heat equation~\eqref{eq:1dproblem} with full domain control. (A) shows the stabilized state using SAC feedback (for $T = 1$, $\gamma = -0.5$) and (B) shows the corresponding control. (C) shows the error of the state in the $L_2$-norm for different parameters $\gamma$ (for $T=1$) and (D) shows this error for different time horizons $T$ (for $\gamma = -0.5$)}
  \label{fig:different_gamma_and_T}\vspace*{-9pt}
\end{figure}

\subsubsection{Subdomain control.}
Coming back to more general settings, the results for control on subdomain $\Omega_c = [0,\infty) \times (0.5, 0.9)$ are reported in Figure~\ref{fig:subdom_control}. Furthermore, in order to check robustness of the closed-loop system after incorporating controls obtained by SAC with respect to some random disturbance, we took in stage problem $\mu = 1.2 \pi^2$ with maximum of $10 \%$ random disturbance on each step during simulation. All other parameters are the same as before. We can observe on the Subfigures~(A), ~(B) and ~(C) that even subdomain control found by SAC gives us the exponential  stabilization rate. These results are promising for future work towards boundary control. On Subfigure~(C) it is possible to see that control obtained by SAC drive the system to some sort of equilibrium interval in which the error remains after reaching a certain small enough value. This shows, that the system is robust with respect to small disturbances which is promising in order to use SAC in the direction of robust optimization. 
\begin{figure}[!t]
\captionsetup{width=0.8\linewidth}
	\begin{subfigure}[b]{0.49\linewidth}
	  \includegraphics[width=1\linewidth]{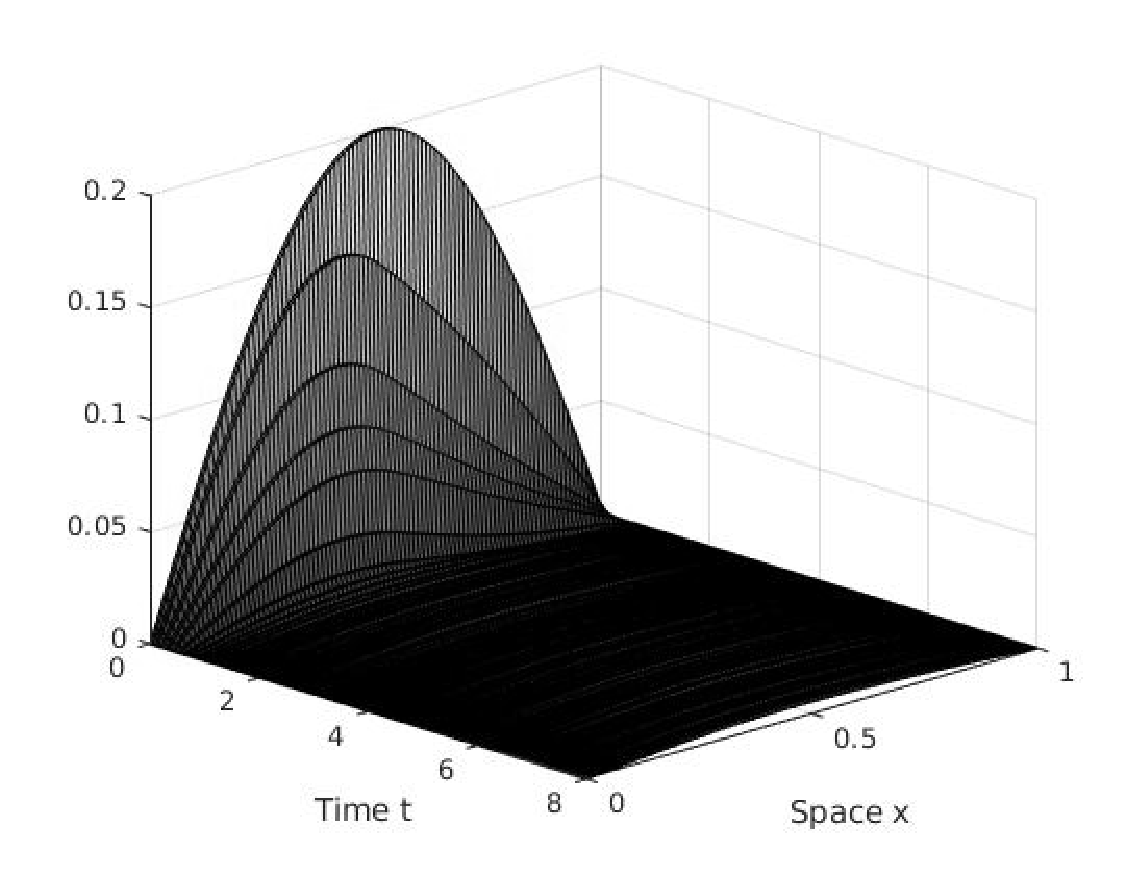}
	  \caption{}
	\end{subfigure}
	\begin{subfigure}[b]{0.49\linewidth}
	  \includegraphics[width=1\linewidth]{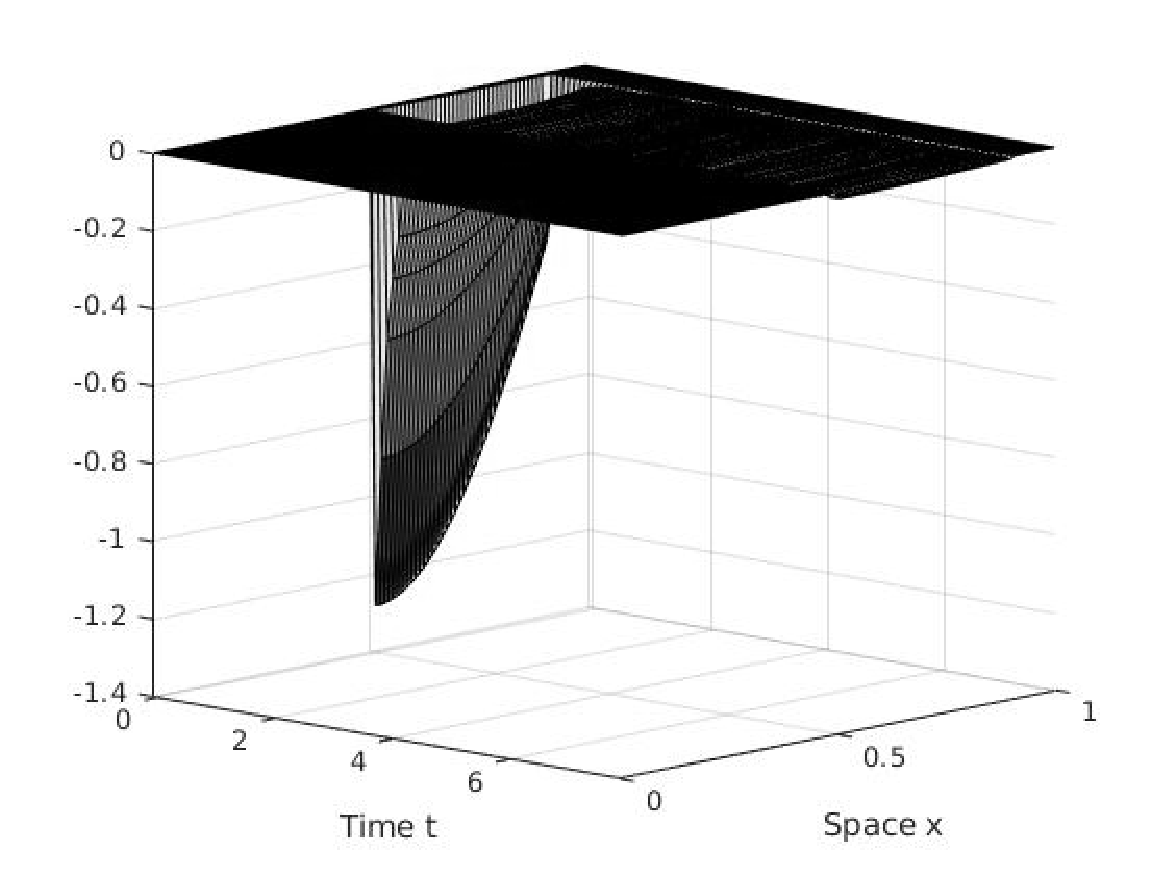}
	  \caption{}
	\end{subfigure}
	\begin{subfigure}[b]{0.49\linewidth}
	  \includegraphics[width=1\linewidth]{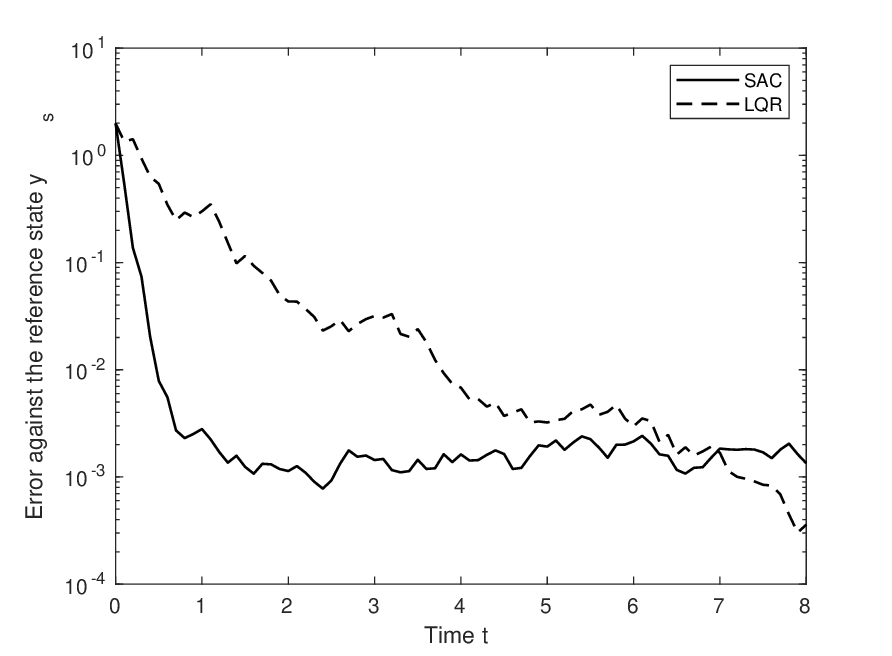}
	  \caption{}
	\end{subfigure}
	  \caption{Numerical results for the unstable heat equation~\eqref{eq:1dproblem} with control on subdomain $\Omega_c = [0,\infty) \times (0.5, 0.9)$ and instability constant $\mu = 1.2 \pi^2$ with $10 \%$ disturbance. (A) shows the stabilized state using SAC feedback (for $T = 1$, $\gamma = -1$) and (B) shows the corresponding control.
   (C) shows the error of the state in the $L_2$-norm for $T=1$, $\gamma= -1$, for SAC and LQR}
  \label{fig:subdom_control}
\end{figure}
\subsubsection{Partial observation.}
Along with control on subdomain we also look on the results of control with partial observation. This means that we will change weight constant $\bar{q}$ for matrix $Q$ in the cost function into $q^{*} = \chi_{(a,b)}(x)\bar{q}$, for some given interval $(a,b)$ where we carry observations. The results for different intervals of observations are given in Figure~\ref{fig:part_obs_control}. On the Subfigures~(A), ~(B) and ~(C) we can observe, that control found by SAC with partial observation remain the exponential stabilization rate shown before. On Subfigure~(C) we can see, that the bigger region of observation will result in better stabilization.
\begin{figure}[!t]
\captionsetup{width=0.8\linewidth}
	\begin{subfigure}[b]{0.49\linewidth}
	  \includegraphics[width=1\linewidth]{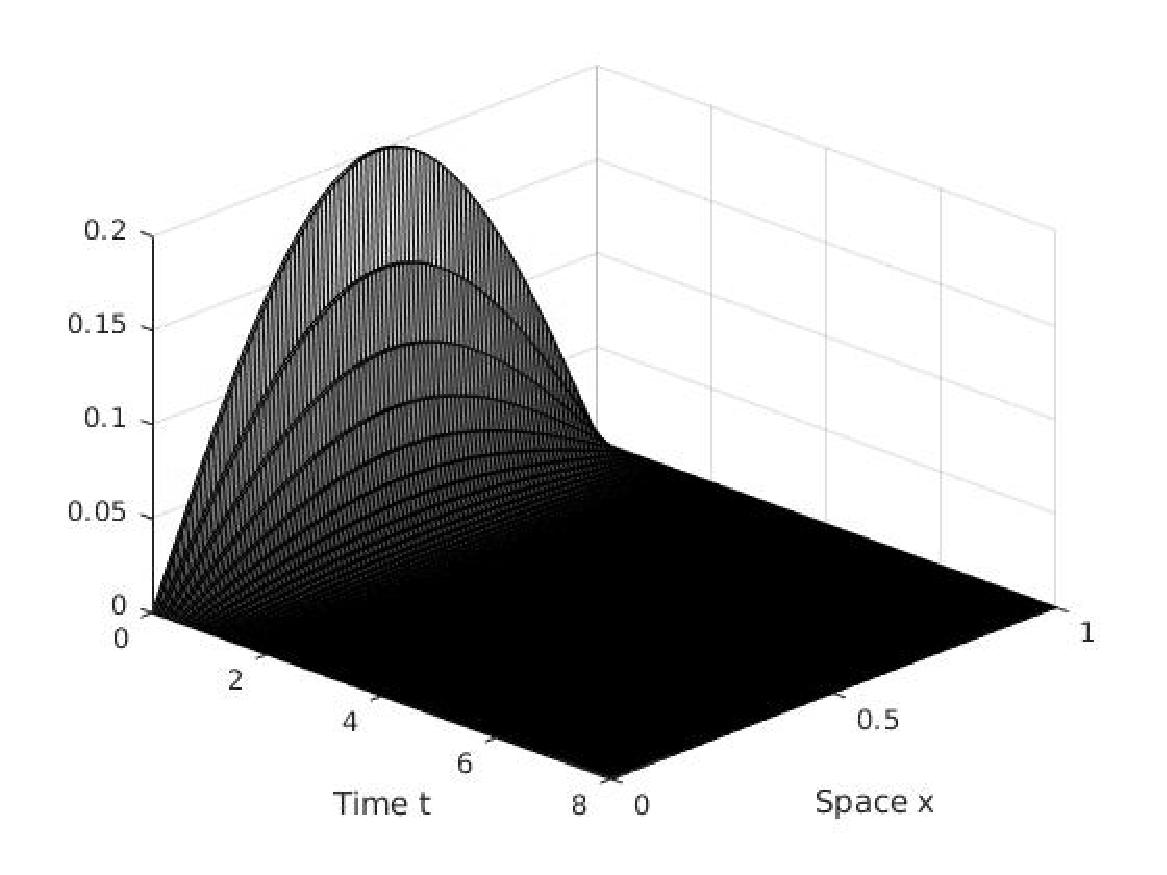}
	  \caption{}
	\end{subfigure}
	\begin{subfigure}[b]{0.49\linewidth}
	  \includegraphics[width=1\linewidth]{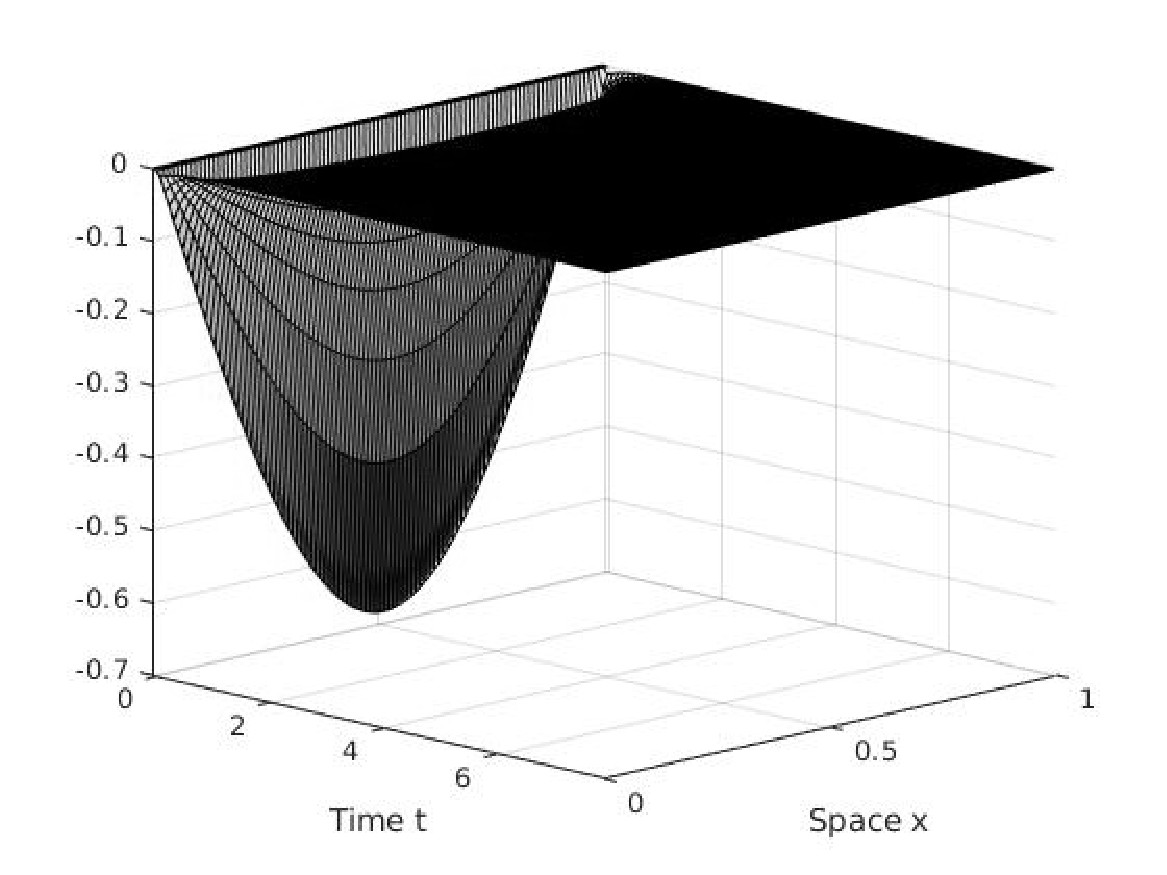}
	  \caption{}
	\end{subfigure}
	\begin{subfigure}[b]{0.49\linewidth}
	  \includegraphics[width=1\linewidth]{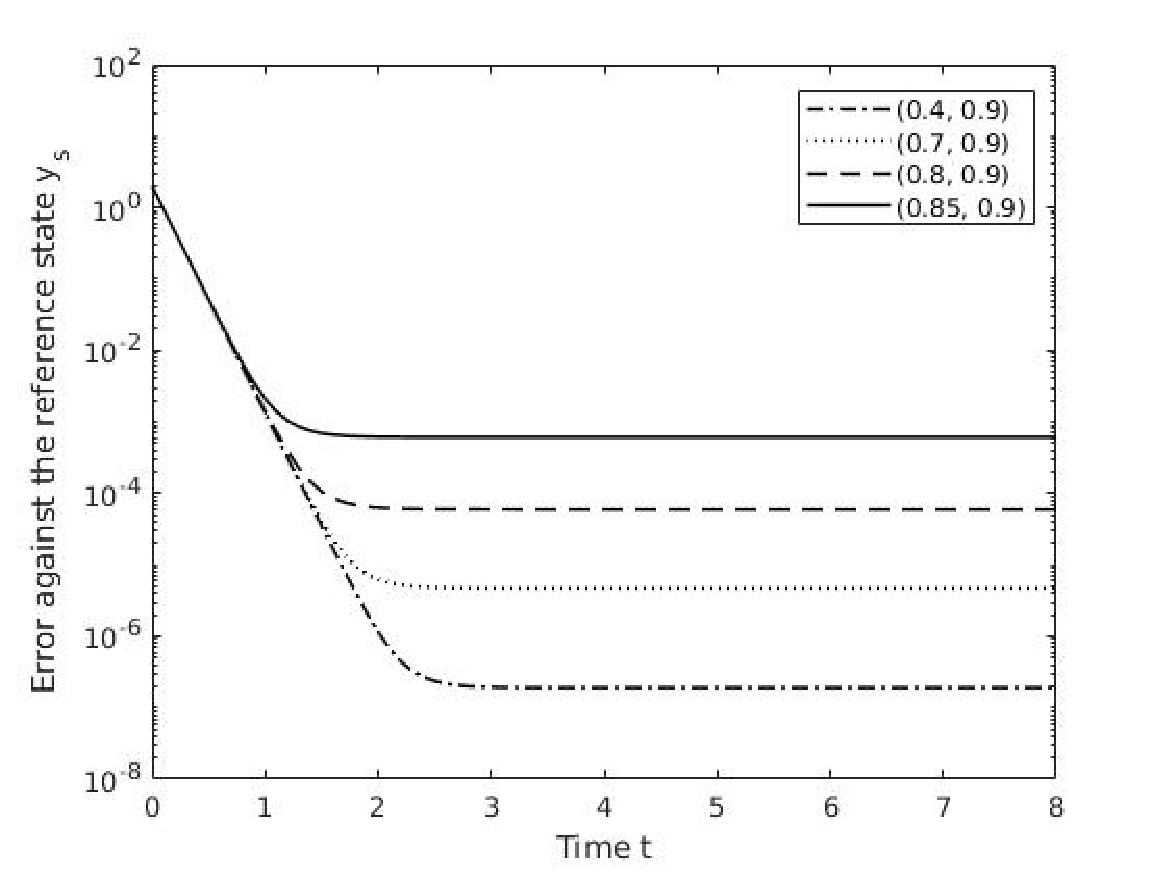}
	  \caption{}
	\end{subfigure}
	  \caption{Numerical results for the unstable heat equation~\eqref{eq:1dproblem} with control with partial observation on $\Omega_c = [0,\infty) \times (0.7, 0.9)$. (A) shows the stabilized state using SAC feedback (for $T = 1$, $\gamma = -0.85$) and (B) shows the corresponding control.
   (C) shows the error of the state in the $L_2$-norm for $T=1$, $\gamma= -0.85$, for different observation intervals}
  \label{fig:part_obs_control}
\end{figure}
\subsubsection{Comparison with LQR method.}
Finally we compare SAC with the standard linear-quadratic regulator (LQR) design for finding optimal control solution u. For the case of subdomain control with some disturbance (as described before), the result is given in Figure~\ref{fig:subdom_control}, Subfigure~(C). It is possible to see, that in the case of SAC the error is reaching acceptable error faster, then the one using LQR. Furthermore, another big improvement is time of work. While standard LQR took 20.03s to find a solution, SAC finds a solution in just 0.33s, which is a great improvement and a prominent direction towards real-time decision-making. Similar results are obtained when comparing LQR and SAC for full-domain control. Even though LQR drive the solution closer to the desired one (error could be much smaller), SAC drives the solution towards acceptable error faster and took significantly less time to perform.
\section{Concluding remarks}\label{sec:conclusion}
Our investigations show that sequential action control (SAC) is a promising framework for control and stabilization of PDE-dynamical problems. 
It is well suited to include measurements in an online fashion for example in order to account for uncertainties. As a particular variant of a moving horizon method 
and in contrast to classical model predictive control, the control synthesis can be done without solving a dynamic optimization problem. This
makes it very easy to implement the controller. Moreover, from this we can see that the control principle can be easily extended to piecewise 
linear switched systems and hence arbitrarily close approximations of nonlinear evolutions. 

Here we have taken a first step towards a qualitative analysis of this control principle in a Hilbert space framework with distributed control. As in the case of problems with 
ordinary differential equations, the stability analysis for the important case of quadratic stage costs turns out to be closely related to 
LQR-theory. We have been applying this prototypically for stabilization of a reaction-diffusion process. 

Possible directions for future work is the stability analysis, including decay rates, for problems including boundary control, potentially using the results obtained in~\cite{Bound_control}, partial state observation, other versions of \eqref{eq:l2problem}, hyperbolic dynamical systems as well as complying with state constraints.

\section*{Funding}
This work was funded by the Deutsche  Forschungsgemeinschaft  (DFG,  German  Research  Foundation) [Project-ID - 239904186] – TRR 154 - subproject A03.
\markboth{Y. Brodskyi, F.~M. Hante, A. Seidel}{\rm STABILIZATION OF PDES BY SEQUENTIAL ACTION CONTROL}
\vspace*{6pt}
\bibliographystyle{abbrv}

\end{document}